\newcommand\org@hypertarget{}
\let\org@hypertarget\hypertarget
\renewcommand\hypertarget[2]{%
    \Hy@raisedlink{\org@hypertarget{#1}{}}#2%
} 
\def\blfootnote{\xdef\@thefnmark{}\@footnotetext}
\def\url@leostyle{%
  \@ifundefined{selectfont}{\def\UrlFont{\sf}}{\def\UrlFont{\small\ttfamily}}}
\numberwithin{equation}{section}
\newcommand{\NN}{\mathbf{N}} % Natural numbers
\newcommand{\ZZ}{\mathbf{Z}} % Integers
\newcommand{\QQ}{\mathbf{Q}} % Rationals
\newcommand{\FF}{\mathbf{F}} % Finite field
\newcommand{\PP}{\mathbf{P}} % Projective space
\DeclareMathOperator{\Gal}{Gal}          % Galois group
\DeclareMathOperator{\ord}{ord}          % Order
\DeclareMathOperator{\Frob}{F}           % Frobenius
\providecommand{\HdR}{H_{\text{dR}}}    % de Rham cohomology
\providecommand{\Hrig}{H_{\text{rig}}}  % rigid cohomology
\providecommand{\cB}{\mathcal{B}} % Basis
\providecommand{\BigOh}{O}          % Big-oh notation
\providecommand{\SoftOh}{\tilde{O}} % Soft-oh notation
\theoremstyle{definition}
\newtheorem{thm}{Theorem}[section]
\newtheorem{defn}[thm]{Definition}
\newtheorem{rem}[thm]{Remark}
\newcommand{\Rmnum}[1]{\expandafter\@slowromancap\romannumeral #1@}
\title{Computing zeta functions of generic projective hypersurfaces in larger characteristic}
\author{Jan Tuitman}
\address{KU Leuven,
         Departement Wiskunde,
         Celestijnenlaan 200B,
         3001 Leuven,
         Belgium}
\email{jan.tuitman@kuleuven.be}
\begin{document}

\begin{abstract}{We give an improvement of the deformation method for computing the zeta function 
of a generic projective hypersurface over a finite field of characteristic~$p$ that reduces the dependence 
of the complexity on~$p$ to $\SoftOh(p^{1/2})$ while remaining polynomial in the other input parameters.} 
\end{abstract}

\maketitle 

%\tableofcontents

\section{Introduction}

Let $\FF_q$ be a finite field of characteristic~$p$ and cardinality $q=p^a$. Suppose that $X$ is
a projective hypersurface of degree $d$ in projective $n$-space $\PP^n_{\FF_q}$. For all $i \in \NN$
we can then count the number of points of $X$ with values in $\FF_{q^i}$. Note that $X$ is 
defined by some homogeneous polynomial of degree~$d$ in $\FF_q[x_0,\ldots,x_n]$ and 
$X(\FF_{q^i})$ is just the number of zeros of this polynomial in projective $n$-space over $\FF_{q^i}$. 
These numbers of points can be used to define a generating series that is called the 
zeta function of $X$:
\[
Z(X,T) = \exp \left( \sum_{i=1}^{\infty} \lvert X(\FF_{q^i}) \rvert \frac{T^i}{i} \right). 
\]

It is well known that $Z(X,T)$ is the quotient of two polynomials with integer coefficients, so
can be given by a finite amount of data. An obvious question is therefore whether it can be
computed effectively. This is clearly the case, since there are well known bounds for the degrees
of the numerator and denominator of $Z(X,T)$, so that one is reduced to computing a finite number
of the $X(\FF_{q^i})$, which can be determined by naive counting. A more interesting problem is
whether, and if so how, $Z(X,T)$ can be computed efficiently. 

Lauder and Wan~\citep{LauderWan2008} proposed an algorithm that computes $Z(X,T)$ in time $(p a d^n)^{\BigOh(n)}$ without
any additional assumptions on $X$. Abbott, Kedlaya and Roe~\citep{AbbottKedlayaRoe2006} proposed a different algorithm with 
roughly the same complexity (but a smaller constant) which assumes $X$ to be generic (at least
smooth). The reason for this assumption is that in contrast to Lauder and Wan, the algorithm of
Abbott, Kedlaya and Roe uses $p$-adic cohomology, which is better behaved in the smooth case.

Note that the input size of the problem, i.e. the bit-size of the defining polynomial of~$X$, is about $\log(p) a d^n$. Therefore, 
the algorithms from the previous paragraph are not polynomial time for multiple reasons: $p^n$ is not polynomial in $\log(p)$, 
$a^n$ is not polynomial in $a$ and $d^{n^2}$ is not polynomial in $d^n$. Only for fixed $p$ and $n$ (but varying $a$ and $d$) are 
these algorithms polynomial time.

Lauder~\citep{Lauder2004a,Lauder2004b} then introduced the deformation method, which computes $Z(X,T)$ in time $(p a d^n)^{\BigOh(1)}$, for
a generic hypersurface $X$. Note that this is a polynomial time algorithm for fixed~$p$. 
The main idea is to compute the $p$-adic cohomology of $X$ by deforming it to a diagonal hypersurface.  The 
author and Pancratz~\citep{PT} improved the deformation method further and showed that it can be made to run in
\begin{align} \label{eq:PTcomplexity}
&\mbox{time: }  \SoftOh\left (p a^{3} d^{n(\omega+4)} e^{n(\omega+1)} \right),
&\mbox{space: }& \SoftOh\left(pa^3 d^{5n} e^{3n}\right),
\end{align}
where $\omega$ denotes an exponent for matrix multiplication, $e$ is the basis of the natural logarithm and we use the $\SoftOh(-)$ 
notation that ignores logarithmic factors, i.e.\ $\SoftOh(f)$ denotes the class of functions that lie in $\BigOh(f \log^k(f))$ for some $k \in \NN$.

All algorithms mentioned so far have complexity at least linear in $p$. Note that this is typically the case for all $p$-adic
point counting algorithms e.g.~\citep{Kedlaya2001,cdv,pcc1,pcc2}. However, Harvey has improved this situation, first for hyperelliptic curves~\citep{HarveySqrtp} 
and then for general schemes~\citep{HarveyArithmeticSchemes}. One algorithm from~\citep{HarveyArithmeticSchemes} 
runs in
\begin{align*} \label{eq:harvey1}
\mbox{time: }  &\BigOh \left(p^{1/2} \log(p)^{2+\epsilon} 2^{8n^2+16n} a^{4n+4+\epsilon} n^{4n+4+\epsilon} (d+1)^{4n^2+7n+\epsilon} \right), \\ 
\mbox{space: } &\BigOh\left(p^{1/2} \log(p) 2^{4n^2+9n} a^{2n+3} n^{2n+2} d^{2n^2+4n} \right).
\end{align*}
Note that this algorithm does not require the hypersurface $X$ to be generic, since the use of $p$-adic cohomology is 
avoided. 

We see that Harvey's algorithm is better than 
the deformation method in terms of the dependence of the time complexity on~$p$, but quite a lot worse in terms of the 
dependence on everything else (i.e. $a,d,n$). In particular, Harvey's algorithm is not polynomial time for fixed~$p$
anymore, since the complexity is polynomial in $d^{n^2}$ instead of $d^n$ and in $a^n$ instead of $a$. The goal of this 
paper is to modify the deformation method so that the dependence of its complexity on~$p$ becomes $\SoftOh(p^{1/2})$, 
while remaining polynomial in $a$, $d^n$. More precisely, we will prove the following theorem.

\begin{thm}
Let $X$ be a generic hypersurface of degree $d$ in projective space $\PP^n_{\FF_q}$ over
a finite field $\FF_q$ of characteristic~$p$ not dividing $d$ and cardinality $q=p^a$. Then the 
zeta function $Z(X,T)$ may be computed in
\begin{align*}
%&\mbox{time: }  \SoftOh\left(pa^3d^{n(\omega+4)}e^{n(\omega+1)}\right),
%&\mbox{space: }& \SoftOh\left(\log(p) a^3 d^{5n} e^{3n} \right), \\
%\intertext{or alternatively in}
&\mbox{time: }  \SoftOh\left(p^{1/2} a^3 d^{n(2\omega+3)} e^{n(\omega+1)}\right), 
&\mbox{space: }& \SoftOh\left(p^{1/2} a^3 d^{n(2\omega+1)}e^{3n}\right).
\end{align*}
\end{thm}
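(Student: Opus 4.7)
The plan is to follow the overall architecture of the Tuitman-Pancratz deformation method, but to accelerate its main $p$-linear bottleneck using Harvey's baby-step giant-step technique for matrix factorials, as in~\citep{HarveySqrtp,HarveyArithmeticSchemes}. I would first set up the standard deformation: choose a smooth one-parameter family $\mathcal{X} \to \mathbb{A}^1_{\FF_q}$ of projective hypersurfaces of degree $d$ in $\PP^n$, whose fiber at some Teichm\"uller point recovers $X$ and whose fiber at $t=0$ is a diagonal hypersurface (for which the Frobenius on rigid cohomology can be written down explicitly in closed form). On this family I would form a suitable basis of relative de~Rham cohomology, compute the Gauss-Manin connection matrix, and solve for the horizontal matrix $C(t)$ satisfying the usual relation $F(t) = C(t^{\sigma})^{-1} F(0) C(t)$ modulo the required $p$-adic precision. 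The Frobenius on $H^{\ast}(X)$ is then extracted by evaluation of this matrix at the chosen Teichm\"uller lift, and $Z(X,T)$ is recovered from its characteristic polynomial via the Lefschetz trace formula.

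The crucial new ingredient is to locate, inside this pipeline, the step whose cost scales linearly in $p$, and rewrite it as the evaluation of a product $M_{K} M_{K-1} \cdots M_{1}$ of matrices $M_{i}$ whose entries are polynomials in the index $i$ of bounded degree (with $K = \BigOh(p)$ times a polynomial in $a, d, n$), and whose size is polynomial in $d^n$. In the Tuitman-Pancratz setting this $p$-linear cost arises from the cohomological reductions needed when computing the Frobenius action at the initial fiber and when truncating the Frobenius expansion on the generic fiber: each pole-order reduction is a matrix application, and the number of reductions grows linearly in $p N$, where $N$ is the $p$-adic precision. Once recast as a matrix factorial with polynomial entries, Harvey's BSGS evaluation scheme computes the product in $\SoftOh(\sqrt{p})$ matrix operations (and with $\SoftOh(\sqrt{p})$ space) at the cost of inflating the per-step matrix dimension by a factor roughly linear in $d^{n}$, which accounts for the shift of the $d$-exponent from $n(\omega+4)$ to $n(2\omega+3)$ in the announced bound.

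The main obstacle will be the second step, i.e.\ exhibiting the precise matrix-factorial structure of the $p$-linear subroutine. The reduction maps on the rigid cohomology of a generic projective hypersurface arise from the Griffiths-Dwork formula and are governed by a system of linear recurrences in the degree/pole-order parameter with coefficients polynomial in that parameter. Packaging these recurrences simultaneously, so that a single product of matrices with polynomial entries of controlled degree performs all reductions needed to compute $F(0)$ and to carry out the corresponding reductions on the generic fiber, requires bookkeeping that is more delicate than for affine curves or for the schemes treated in~\citep{HarveyArithmeticSchemes}, because the Gauss-Manin connection must also be respected. Once this is done, estimating the resulting complexity is a routine combination of the Tuitman-Pancratz bounds on basis size, precision loss, and Gauss-Manin solving, with Harvey's bounds on the BSGS evaluation of matrix factorials, and yields both the time and space complexities stated in the theorem.
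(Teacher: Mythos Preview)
Your high-level strategy is right: the point is to recast the $p$-linear bottleneck as a matrix factorial of length $\sim p$ with polynomial entries, and then apply the Bostan--Gaudry--Schost/Harvey accelerated product. But you have misidentified where the $p$-linear cost actually lives in the deformation method, and the specific recurrences the paper accelerates are not the ones you describe.

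Griffiths--Dwork pole-order reductions are used only in Step~1 of the deformation algorithm (computing the Gauss--Manin connection matrix $M$), and that step already runs in time polynomial in $\log p$; there is nothing to accelerate there. The two genuinely $p$-linear steps are:
\begin{itemize}
\item \emph{Step~2 (diagonal Frobenius $\Phi_0$).} This is computed from an explicit closed formula involving factorials $k!$ for $k$ up to $\sim p\cdot a d^n$; there are no cohomological reductions at all. The paper applies BGS with $m=1$ to the scalar products $(k+1)(k+2)\cdots(k+p)$.
\item \emph{Step~3 (solving the differential equation).} The $p$-linearity comes from the required $t$-adic precision $K\sim p\,a d^{2n} e^n$ on the power-series solution $C(t)$, not from any pole-order reduction. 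The recurrence being accelerated is the one for the $t$-coefficients of $C$ (equivalently of $D=(r/r(1))^{\theta}C$), and several extra tricks are needed before BGS applies: absorb the factor $r^{\theta}$ into the ODE so one solves directly for $D$; pass to partial sums $E_i=D_0+\cdots+D_i$ so that evaluation at $t=1$ of $D\,\Phi_0\,\sigma(C^{-1})$ reduces to computing $\lceil K/p\rceil$ of the $E_i$ rather than all $K$ of the $D_i$; and finally stack $\kappa=\max\{\deg(H)+1,\deg(r)\}+1\in\SoftOh((de)^n)$ consecutive $E_i$'s to turn the order-$\kappa$ recurrence into an order-$1$ matrix recurrence of size $\kappa b\times\kappa b$. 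It is this stacking (not anything about Griffiths--Dwork) that inflates the matrix dimension and produces the exponent shift $n(\omega+4)\to n(2\omega+3)$.
\end{itemize}
As written, your plan would attempt to speed up a step that is already fast and would leave the actual bottleneck (the $t$-adic precision of the ODE solve) untouched. The missing ideas are the partial-sum trick for $E_i$ and the block-companion reformulation of the $t$-coefficient recurrence; once those are in place, the BGS bound combined with the precision estimates from \citep{PT} gives the stated complexity.
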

\begin{proof}
This is a simplified version of Theorem~\ref{thm:complexity} below.
\end{proof}

What it means for a hypersurface to be generic is explained in Remark~\ref{rem:generic} below. Comparing our new 
algorithm to Harvey's we see that our exponents of $a$ and $d^n$ are constant instead of~$\BigOh(n)$ and 
smaller for any value of $n$. The dependence on~$p$ is more or less the same (probably exactly the same, with some more work). 
The only remaining advantages of Harvey's algorithm over ours are that it can be applied more generally and is 
conceptually simpler.

%Another algorithm from~\citep{HarveyArithmeticSchemes} starts from a hypersurface $X$ of degree $d$ in projective 
%$n$-space $\PP^n_{\ZZ}$
%over the integers defined by a homogeneous polynomial $f \in \ZZ[x_0,\ldots,x_n]$ and computes $Z(X \otimes \FF_p,T)$ 
%for all primes $p<N$ in
%\begin{align*}
%&\mbox{time: }  \BigOh\left(N \log^2(N) \log^{1+\epsilon}(N \lVert f \rVert) 2^{8n^2+16n} n^{4n+6+\epsilon} (d+1)^{4n^2+7n+\epsilon}  \right), \\
%&\mbox{space: } \BigOh\left(N \log(N) \log(ndN \lVert f \rVert) 2^{4n^2+11n} n^{2n+4} (d+1)^{2n^2+5n}\right),
%\end{align*}
%where $\lVert f \rVert$ denotes a bound on the coefficients of $f$. Since there are about $N/\log(N)$ primes
%$p<N$, the time per prime is actually polynomial in $\log(N)$, so that this is an average polynomial time algorithm. We
%expect that using the ideas in this paper and the formalism of accumulating remainder trees~\citep{HarveyWilson, HarveyAvgPoly}, one 
%should be able to obtain an algorithm that is average polynomial time in~$p$ and polynomial time in $a$ and $d^n$ at the 
%same time. However, because of some technical complications, for now this remains work in progress.

The author was supported by FWO Vlaanderen. We thank David Harvey for helpful discussions and the anonymous referees for
their useful suggestions.

\section{The deformation method}

In this section we briefly recall how the deformation method works. For more details the
reader should consult~\citep{PT}. We will keep the terminology and notation from 
that paper here as much as possible. 

Let $\FF_q$ be a finite field of characteristic~$p$ and cardinality $q=p^a$. Suppose that $X_1$ is
a projective hypersurface of degree~$d$ in projective $n$-space $\PP^n_{\FF_q}$ and let $U_1 = \PP^n_{\FF_q} \setminus X_1$ 
denote its complement. Moreover, let $\QQ_q$ be the unique unramified extension of $\QQ_p$ with residue 
field $\FF_q$, let $\ZZ_q$ be its ring of integers and $\sigma \in \Gal(\QQ_q/\QQ_p)$ the unique lift of the $p$-th
power map on $\FF_q$. We extend $\sigma$ to the standard $p$-th power 
Frobenius lift on $\mathbf{P}^1_{\QQ_q}$, i.e. the one that sends the standard coordinate~$t$ to $t^p$. 
Finally, let $\ord_p(-)$ denote the $p$-adic valuation on $\QQ_q$. 

For an algebraic variety $Y$ over $\FF_q$, let $\Hrig^{i}(Y)$ denote the rigid
cohomology spaces of $Y$. These are finite dimensional vector spaces over $\QQ_q$ 
that are contravariantly functorial in $Y$, and are equipped with a 
$\sigma$-semilinear action of the $p$-th power Frobenius map on $Y$ that we denote
by $\Frob_p$. For the construction and basic properties of these spaces we refer to~\citep{Berthelot1986}.

To compute the zeta function $Z(X_1,T)$ it is sufficient to compute 
the cohomology space $\Hrig^n(U_1)$ with the action of $\Frob_p$ by the following theorem.

\begin{thm} \label{thm:hypersurface} 
We have
\begin{equation*} \label{eq:formulazeta}
Z(X_1,T) = \frac{\chi(T)^{(-1)^n}}{(1 - T) (1 - qT) \dotsm (1 - q^{n-1}T)},
\end{equation*}
where 
$\chi(T) = \det \bigl( 1 - T (p^{-1} \Frob_{p})^a | \Hrig^n(U_1) \bigr) \in \ZZ[T]$
denotes the reverse characteristic polynomial of the action of $(p^{-1} \Frob_{p})^a$ 
on $\Hrig^n(U_1)$. Moreover, the polynomial $\chi(T)$ has degree 
\begin{equation*} \label{eq:formulab}
\frac{1}{d} \bigl((d-1)^{n+1} + (-1)^{n+1}(d-1) \bigr).
\end{equation*}
\end{thm}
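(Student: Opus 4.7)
The plan is to apply the Lefschetz trace formula in rigid cohomology directly to $X_1$, identify the nontrivial contribution as coming from the primitive middle cohomology of $X_1$, and then use the Gysin residue isomorphism to re-express this in terms of $\Hrig^n(U_1)$.

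First I would apply the trace formula
$\lvert X_1(\FF_{q^i})\rvert = \sum_k (-1)^k \Tr(\Frob_q^i \mid \Hrig^k(X_1))$,
invoke weak Lefschetz to identify $\Hrig^k(X_1) \cong \Hrig^k(\PP^{n-1})$ for $k \neq n-1$ and decompose $\Hrig^{n-1}(X_1) = \Hrig^{n-1}(\PP^{n-1}) \oplus \Hrig^{n-1}(X_1)_{\mathrm{prim}}$, and collect the $\PP^{n-1}$ contributions into the factor $1 + q^i + \cdots + q^{(n-1)i}$. This gives
\[
\lvert X_1(\FF_{q^i})\rvert = \sum_{j=0}^{n-1} q^{ij} + (-1)^{n-1} \Tr\bigl(\Frob_q^i \mid \Hrig^{n-1}(X_1)_{\mathrm{prim}}\bigr).
\]
Exponentiating then produces $Z(X_1,T) = P_{\mathrm{prim}}(T)^{(-1)^n}/\prod_{j=0}^{n-1}(1 - q^j T)$, where $P_{\mathrm{prim}}(T) = \det(1 - T\Frob_q \mid \Hrig^{n-1}(X_1)_{\mathrm{prim}})$.

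To match this with the claimed formula involving $\chi(T)$, I would appeal to the Gysin residue isomorphism $\Hrig^n(U_1) \cong \Hrig^{n-1}(X_1)_{\mathrm{prim}}(-1)$ for the smooth divisor $X_1 \subset \PP^n$ (valid because $p \nmid d$, so the cycle class of $X_1$ generates $\Hrig^2(\PP^n)$): the Tate twist scales Frobenius eigenvalues by $q$, so the eigenvalues of $(p^{-1}\Frob_p)^a = q^{-1}\Frob_q$ acting on $\Hrig^n(U_1)$ coincide with those of $\Frob_q$ on $\Hrig^{n-1}(X_1)_{\mathrm{prim}}$, whence $\chi(T) = P_{\mathrm{prim}}(T)$. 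For the degree statement, the same Gysin isomorphism gives $\deg \chi(T) = \dim \Hrig^{n-1}(X_1)_{\mathrm{prim}}$, which is then evaluated by the classical Euler characteristic calculation for a smooth hypersurface: the adjunction formula yields $\chi(X_1) = \frac{(1-d)^{n+1} - 1}{d} + (n+1)$, and subtracting $\chi(\PP^{n-1}) = n$ and multiplying by $(-1)^{n-1}$ produces the stated formula.

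The main technical obstacle is tracking the Tate twist correctly in the Gysin isomorphism so that the rescaling built into $\chi(T)$ absorbs exactly one factor of $q$: this is what causes the denominator of $Z(X_1,T)$ to stop at $q^{n-1}T$ rather than continuing to $q^nT$. A secondary obstacle is verifying that $\Hrig^n(U_1)$ really is isomorphic to $\Hrig^{n-1}(X_1)_{\mathrm{prim}}(-1)$ in rigid cohomology and not merely on the level of Frobenius eigenvalues, which amounts to checking the vanishing of $\Hrig^k(U_1)$ for $0 < k < n$ and extracting the residue map from the Gysin long exact sequence with its standard twist.
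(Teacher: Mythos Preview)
The paper does not give a proof here: it simply cites \citep[Theorem~2.13]{PT}. What you have written is a sketch of the standard argument that underlies that citation --- trace formula plus weak Lefschetz to isolate the primitive middle cohomology of $X_1$, Gysin sequence for the smooth pair $(\PP^n, X_1)$ to identify $\Hrig^n(U_1)$ with $\Hrig^{n-1}(X_1)_{\mathrm{prim}}(-1)$, and the classical Euler-characteristic computation for the degree. Your sketch is correct in outline and your Euler-characteristic arithmetic checks out.

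One minor correction: the parenthetical ``valid because $p \nmid d$, so the cycle class of $X_1$ generates $\Hrig^2(\PP^n)$'' is not the right justification. Rigid cohomology has coefficients in $\QQ_q$, a field of characteristic zero, so $d[H]$ generates $\Hrig^2(\PP^n)$ for any $d \neq 0$ regardless of divisibility by $p$. What makes the Gysin sequence split off the primitive part cleanly is that $X_1$ is smooth (so purity and weak Lefschetz apply); the composite $\Hrig^{n-1}(\PP^n) \to \Hrig^{n-1}(X_1) \to \Hrig^{n+1}(\PP^n)(1)$ is cup product with $d[H]$, which is invertible over $\QQ_q$. The hypothesis $p \nmid d$ is used elsewhere in the paper (smoothness of the diagonal fibre $\mathcal{X}_0$), not in this theorem.
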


\begin{proof}
\citep[Theorem 2.13]{PT}.
\end{proof}

The main idea of the deformation method is to take advantage of the way the rigid cohomology spaces
and the action of $\Frob_p$ vary in a family. 

\begin{defn}
Let $P_1 \in \ZZ_q[x_0,\ldots,x_n]$ be 
any homogeneous polynomial of degree $d$ not divisible by~$p$ that defines a hypersurface $\mathcal{X}_1 \subset \PP^n_{\ZZ_q}$ 
such that $\mathcal{X}_1 \otimes \FF_q \cong X$ and let $P_0 \in \ZZ_q[x_0,\ldots,x_n]$ be a diagonal polynomial
$P_0 = a_0 x_0^d + \ldots + a_n x_n^d$ with $a_1,\ldots,a_n \in \ZZ_p^{\times}$. Define $P \in \ZZ_q[t][x_0,\ldots,x_n]$ to be
\[P = (1-t)P_0 + tP_1.\] 
\end{defn}

The polynomial $P$ defines a family $\mathcal{X}/\mathcal{S}$ of smooth projective hypersurfaces  over some nonempty Zariski open subset
$\mathcal{S}$ of $\mathbf{P}^1_{\ZZ_q}$ such that the fibre $\mathcal{X}_0$ at~$t=0$ is the diagonal hypersurface defined by $P_0$ and 
the fibre $\mathcal{X}_1$ at~$t=1$ is isomorphic to $\mathcal{X}_1$. 
Let $\mathcal{U}/\mathcal{S}$ denote the complement
of $\mathcal{X}/\mathcal{S}$. We denote the generic fibres of $\mathcal{X},\mathcal{S}$, $\mathcal{U}$ by 
$\mathfrak{X}=\mathcal{X} \otimes \QQ_q$, $\mathfrak{S}=\mathcal{S} \otimes \QQ_q$, $\mathfrak{U}=\mathcal{U} \otimes \QQ_q$ and the special
fibres by $X=\mathcal{X} \otimes \FF_q$, $S = \mathcal{S} \otimes \FF_q$, $U = \mathcal{U} \otimes \FF_q$, 
respectively. 

\begin{thm}
The vector bundle $\HdR^n(\mathfrak{U}/\mathfrak{S})$ with its Gauss--Manin connection $\nabla$ 
admits a Frobenius structure $F$ with the following property: for any $\tau \in S(\FF_q)$ with Teichm\"uller lift 
$\hat{\tau} \in \mathcal{S}(\ZZ_q)$, we have
\[
(\Hrig^n(U_{\tau}),\Frob_p) \cong (\HdR^n(\mathfrak{U}/\mathfrak{S}),F)_{\hat{\tau}}
\] 
as $\QQ_{q}$-vector spaces with a $\sigma$-semilinear endomorphism. We will therefore denote this Frobenius structure 
on $\HdR^n(\mathfrak{U}/\mathfrak{S})$ by $\Frob_p$ as well.
\end{thm}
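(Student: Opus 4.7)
The plan is to construct $F$ through an explicit relative Monsky--Washnitzer complex computing $\HdR^n(\mathfrak{U}/\mathfrak{S})$, and then read off the specialization at Teichm\"uller points essentially tautologically, using that such points are fixed by $\sigma$.

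First, one observes that $\mathcal{U}/\mathcal{S}$ is smooth (being the complement of the smooth family $\mathcal{X}/\mathcal{S}$ in $\PP^n_{\mathcal{S}}$), so $\HdR^n(\mathfrak{U}/\mathfrak{S})$ is a coherent sheaf on $\mathfrak{S}$ of locally constant rank carrying the Gauss--Manin connection $\nabla$. Concretely, as is used throughout~\citep{PT}, one represents classes by relative $n$-forms on $\PP^n_{\mathfrak{S}}$ with poles along $\mathfrak{X}$ of order bounded in terms of $d$ and $n$, and works with an explicit complex on the weak completion of an affine chart of $\mathcal{S}$.

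Next, I would extend $\sigma$ (acting on $\ZZ_q$ and sending $t \mapsto t^p$) to the $p$-th power map on coordinates $x_i \mapsto x_i^p$. Setting
\[
F(1/P) = \frac{1}{\sigma(P)(t^p,x^p)} = \frac{1}{P^p} \sum_{k \geq 0} \left(-\frac{\sigma(P)(t^p,x^p) - P^p}{P^p}\right)^k,
\]
which converges in the appropriate weak completion because $\sigma(P)(t^p,x^p) - P^p$ is divisible by $p$, one obtains a $\sigma$-semilinear ring endomorphism lifting the $p$-th power Frobenius. It acts on the relative Monsky--Washnitzer complex and, by a direct computation, commutes with $\nabla$; passing to cohomology produces the desired Frobenius structure $F$.

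Finally, to verify the specialization, one uses that $\sigma(\hat\tau) = \hat\tau$, so reducing modulo $(t - \hat\tau)$ yields an integral model of $U_\tau$ together with a $p$-th power Frobenius lift on it. The fibre at $\hat\tau$ of the relative Monsky--Washnitzer complex becomes the Monsky--Washnitzer complex computing $\Hrig^n(U_\tau)$ with its intrinsic $\Frob_p$, which gives the required isomorphism $(\HdR^n(\mathfrak{U}/\mathfrak{S}),F)_{\hat\tau} \cong (\Hrig^n(U_\tau),\Frob_p)$. The main technical obstacle is showing that the series for $F(1/P)$ overconverges on a strict neighborhood of the tube of $S$ in $\PP^{1,\mathrm{an}}_{\QQ_q}$ that meets every Teichm\"uller point above $S(\FF_q)$; this ultimately boils down to controlling the poles of the discriminant of $P$ in $t$, as is done carefully in~\citep{PT}.
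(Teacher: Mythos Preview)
The paper's own ``proof'' is simply a citation to \citep[Theorem~2.10]{PT}; no argument is given in this paper. Your sketch is a reasonable outline of the standard construction that underlies that cited result: one builds a relative dagger algebra for $\mathcal{U}/\mathcal{S}$, lifts Frobenius by $x_i\mapsto x_i^p$, $t\mapsto t^p$ together with the usual geometric series expansion of $1/\sigma(P)$, checks horizontality to get a Frobenius structure on relative de Rham cohomology, and then specializes at Teichm\"uller points using $\sigma(\hat\tau)=\hat\tau$. This is indeed essentially how the result is proved in \citep{PT} (which in turn relies on Baldassarri--Chiarellotto and Berthelot for the comparison between relative rigid and de Rham cohomology and the base-change to fibres).

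Two small points worth tightening in your write-up. First, you describe $\HdR^n(\mathfrak{U}/\mathfrak{S})$ only as ``a coherent sheaf of locally constant rank''; for the statement you actually need it to be a vector bundle (locally free), which follows from smoothness and properness of the compactification via the usual Hodge-theoretic argument, and is part of what \citep{PT} invokes. Second, the overconvergence of the Frobenius lift is governed not by the discriminant of $P$ in $t$ but by the polynomial $r$ (or $R$) cutting out the complement of $\mathcal{S}$, as in the later discussion of $\QQ_q\langle t,1/r\rangle^\dag$; this is what \citep{PT} makes precise. With those adjustments your outline matches the cited proof.
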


\begin{proof}
\citep[Theorem 2.10]{PT}.
\end{proof}

We can explicitly write down a basis for $\HdR^n(\mathfrak{U}/\mathfrak{S})$, at least generically.

\begin{defn}
For $k \in \NN$, we define the following sets of monomials:
\[
B_k = \{ x^u \colon u \in \ZZ_{\geq 0}^{n+1}, \lvert u \rvert = kd - (n+1) \mbox{ and } u_i < d-1 \mbox{ for all } i \}, 
\]
where $x^u = x_0^{u_0} \ldots x_n^{u_n}$ and $\lvert u \rvert = u_0 + \ldots u_n$. If $x^u$ is contained in some $B_k$, then
we let $k(u)$ denote this $k$. 
Let $\Omega$ denote the $n$-form on $\mathfrak{U}/\mathfrak{S}$ defined by 
\begin{align*}
\Omega = \sum_{i=0}^n (-1)^i x_i d x_0 \wedge \dotsb \wedge \widehat{d x_i} \wedge \dotsb \wedge d x_n,
\end{align*}
where $\widehat{d x_i}$ means that $d x_i$ is left out. We then define
$\mathcal{B}_k = \{ Q \Omega/P^k \colon Q \in B_k \}$, $B=B_1 \cup \ldots \cup B_n$, $\mathcal{B}=\mathcal{B}_1 \cup \ldots \mathcal{B}_n$ and $\lvert B \rvert = b$.
\end{defn}

\begin{thm}
The set $\mathcal{B}$ (restricted to the fibre at $t=0$) forms a basis for $\Hrig^n(U_0)$. Moreover, $\mathcal{B}$ 
also forms a basis for $\HdR^n(\mathfrak{U} \otimes \QQ_q(t))$, so that the same conclusion holds for almost all other fibres. 
\end{thm}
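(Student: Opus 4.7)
The plan is to invoke the classical Griffiths--Dwork description of the de Rham cohomology of the complement of a smooth projective hypersurface. For any homogeneous $P$ of degree $d$ cutting out a smooth hypersurface in $\PP^n$ over a characteristic zero field, every class in $\HdR^n(\PP^n \setminus \{P=0\})$ is representable by a sum of forms $Q\Omega/P^k$ with $Q$ homogeneous of degree $kd-(n+1)$. The Griffiths reduction identity expresses $\bigl(\sum_i Q_i \partial_i P\bigr)\Omega/P^{k+1}$ as an exact differential plus a form of pole order $k$, so numerators in the Jacobian ideal $J_P = (\partial_0 P, \ldots, \partial_n P)$ may be removed modulo lower pole order. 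Combined with a Koszul/Hilbert--series argument this yields the graded isomorphism
\[
\HdR^n(\PP^n \setminus \{P=0\}) \;\cong\; \bigoplus_{k \geq 1} R_{kd-(n+1)} \big/ (J_P)_{kd-(n+1)},
\]
where $R$ denotes the ambient polynomial ring over the relevant coefficient field.

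First I would apply this at $t=0$ with $P_0 = \sum_i a_i x_i^d$. Since $p \nmid d$ and the $a_i$ lie in $\ZZ_p^\times$, the Jacobian ideal $J_{P_0}$ equals the monomial ideal $(x_0^{d-1}, \ldots, x_n^{d-1})$, so $R/J_{P_0}$ has the standard monomial basis $\{x^u : u_i \leq d-2 \text{ for all } i\}$. Intersecting with the degree $kd-(n+1)$ graded piece recovers exactly $B_k$; for $k \leq 0$ or $k \geq n+1$ the degree inequalities force $B_k = \emptyset$. Combining with the comparison between the de Rham cohomology of the smooth lift (the fibre $\mathfrak{U}_0$ of $\mathfrak{U}$ at $t=0$) and $\Hrig^n(U_0)$ then shows that $\mathcal{B}$ is a basis of $\Hrig^n(U_0)$.

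For the statement over $\QQ_q(t)$, smoothness of $\mathcal{X}/\mathcal{S}$ forces the partials $\partial_0 P, \ldots, \partial_n P$ to form a regular sequence, so $J_P$ is a complete intersection and the Koszul resolution gives Hilbert series $\bigl((1-T^{d-1})/(1-T)\bigr)^{n+1}$ for $R/J_P$, which coincides with that of $R/J_{P_0}$. Hence $\dim_{\QQ_q(t)} R_m/(J_P)_m = \lvert B_k \rvert$ in degree $m = kd-(n+1)$, and the Griffiths--Dwork isomorphism applies to $\HdR^n(\mathfrak{U} \otimes \QQ_q(t))$. To upgrade equality of cardinalities to ``$B_k$ is a basis of $R_m/(J_P)_m$'', one observes that the transition matrix expressing $B_k$ in any reference basis specializes at $t=0$ to an invertible matrix by the previous paragraph, and therefore is invertible over $\QQ_q(t)$.

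The main obstacle is setting up the Griffiths--Dwork isomorphism precisely, namely proving that the pole-order filtration collapses onto $\bigoplus_k R_m/(J_P)_m$ as an honest direct sum rather than a mere spanning set; this step genuinely uses the Koszul/Hilbert-series input above. Once that input is in hand (which it is, by the regular-sequence property at smooth points), the remainder is formal and parallels the analogous argument in~\citep{PT}.
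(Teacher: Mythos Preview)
Your proposal is correct and follows the standard Griffiths--Dwork argument; the paper itself does not reprove this statement but simply cites \citep[Theorem~3.9]{PT}, whose proof proceeds along exactly the lines you outline (reduction via the Jacobian ideal, the explicit monomial basis in the diagonal case, and a specialization/semicontinuity argument for the generic fibre). Your sketch is thus a faithful unpacking of the cited reference rather than a different route.
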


\begin{proof}
\citep[Theorem 3.9]{PT}. 
\end{proof}

\begin{rem}
Note that $b=\frac{1}{d} \bigl((d-1)^{n+1} + (-1)^{n+1}(d-1) \bigr)$ by Theorem \ref{thm:hypersurface}.
\end{rem}

Let $M \in M_{b \times b}(\QQ_q(t))$ denote the matrix of the Gauss--Manin connection $\nabla$ with
respect to the basis $\mathcal{B}=[\omega_1,\ldots,\omega_b]$, i.e. 
\[
\nabla(\omega_j) = \sum_{i=1}^b M_{ij} \omega_i \otimes dt
\]
and let $r \in \ZZ_q[t]$ with $\ord_p(r)=0$ denote a denominator for $M$, i.e. such that we can write $M=G/r$ with 
$G \in M_{b \times b}(\QQ_q[t])$. Moreover, let $\Phi$ denote the matrix of $p^{-1}\Frob_p$ with respect to
the basis $\mathcal{B}$, i.e.
\[
p^{-1} \Frob_p(\omega_j) =\sum_{i=1}^b \Phi_{ij} \omega_i.
\]
Note that $\Phi$ has entries in the ring
\[
\QQ_q \left\langle t, 1/r \right\rangle^{\dag} = 
\Biggl\{\sum_{i,j=0}^{\infty} a_{i,j} \frac{t^i}{r^j} \; : \; 
a_{i,j} \in \QQ_q, \; \exists c > 0 \text{ s.t.}  
\lim_{i+j \rightarrow \infty} \bigl(\ord_p(a_{i,j}) - c(i+j)\bigr) \geq 0
\Biggr\},
\]
of overconvergent functions. 
The fact that $F$ defines a Frobenius structure on the vector bundle $\HdR^n(\mathfrak{U}/\mathfrak{S})$ with its Gauss--Manin 
connection $\nabla$ translates into the following differential equation for the matrix~$\Phi$.

\begin{thm} \label{thm:eqphi} 
The matrix $\Phi \in M_{b \times b}(\QQ_q \langle t, 1/r \rangle^{\dag})$ 
satisfies the differential equation
\begin{align*}
\left(\frac{d}{dt} + \frac{G}{r} \right) \Phi &= p t^{p-1} \Phi \sigma(M), &\Phi(0)& = \Phi_0,
\end{align*}
where $\Phi_0$ is the matrix of $p^{-1}\Frob_p$ on $\Hrig^n(U_0)$ with respect to the 
basis $\mathcal{B}$.
\end{thm}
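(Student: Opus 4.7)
The plan is to extract the differential equation directly from the compatibility of the Frobenius structure $F$ with the Gauss--Manin connection $\nabla$. The essential input is that the Frobenius lift chosen on $\mathbf{P}^1_{\ZZ_q}$ sends $t$ to $t^p$, so the pullback of $dt$ is $p t^{p-1}\, dt$; translated into a statement about the derivation $d/dt$, horizontality of $\Frob_p$ becomes the operator identity
\[
\nabla_{d/dt} \circ \Frob_p \;=\; p t^{p-1} \cdot \Frob_p \circ \nabla_{d/dt}
\]
on sections of $\HdR^n(\mathfrak{U}/\mathfrak{S})$, where the $\sigma$-semilinearity of $\Frob_p$ is taken to include the substitution $t \mapsto t^p$.

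First I would apply both sides to a basis element $\omega_j$ and write everything in the basis $\mathcal{B}$. Setting $\Psi = p\Phi$ so that $\Frob_p(\omega_j) = \sum_i \Psi_{ij} \omega_i$ and differentiating via the Leibniz rule, together with $\nabla_{d/dt}(\omega_i) = \sum_k M_{ki}\omega_k$, gives
\[
\nabla_{d/dt}\bigl(\Frob_p(\omega_j)\bigr) \;=\; \sum_i \bigl(\Psi'_{ij} + (M\Psi)_{ij}\bigr)\, \omega_i .
\]
For the right-hand side I would expand $\Frob_p(\nabla_{d/dt}\omega_j) = \Frob_p\bigl(\sum_k M_{kj}\omega_k\bigr)$ using semilinearity; the coefficients become $\sigma(M_{kj})$, and reindexing produces $\sum_i (\Psi\,\sigma(M))_{ij}\,\omega_i$. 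Multiplying by $p t^{p-1}$ and comparing coefficients yields the matrix identity
\[
\Psi' + M\Psi \;=\; p t^{p-1}\, \Psi\, \sigma(M).
\]
Substituting $\Psi = p\Phi$, dividing by $p$, and recalling $M = G/r$ recovers the claimed equation $(d/dt + G/r)\,\Phi = p t^{p-1}\, \Phi\,\sigma(M)$.

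For the initial condition I would specialize at $t = 0$: since the Teichm\"uller lift of $0 \in S(\FF_q)$ in $\mathcal{S}(\ZZ_q)$ is $0$ itself, the previously cited comparison theorem supplies a natural isomorphism $(\HdR^n(\mathfrak{U}/\mathfrak{S}), F)_0 \cong (\Hrig^n(U_0), \Frob_p)$ under which the basis $\mathcal{B}$ specializes to the analogous basis of $\Hrig^n(U_0)$; consequently $\Phi(0) = \Phi_0$ by the very definition of $\Phi_0$. The step I expect to require the most care is the precise formulation of the horizontality relation, because $\Frob_p$ is $\sigma$-semilinear rather than $\QQ_q(t)$-linear and the Jacobian factor $p t^{p-1}$ enters through the base-change $t \mapsto t^p$; once this is spelled out correctly, everything else is a mechanical computation in the basis $\mathcal{B}$. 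The fact that the entries of $\Phi$ live in the overconvergent ring is already encoded in the Frobenius structure on the vector bundle and does not enter the derivation of the equation itself.
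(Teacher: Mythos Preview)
Your derivation is correct: the horizontality of the Frobenius structure with respect to $\nabla$, combined with the fact that the chosen Frobenius lift sends $t\mapsto t^p$ (so $\sigma^*dt=pt^{p-1}\,dt$), yields exactly the matrix identity you wrote, and the specialization at $t=0$ gives the initial condition via the comparison isomorphism. The paper itself does not reprove this statement but simply cites \citep[Theorem 2.17]{PT}; your argument is essentially the standard computation underlying that citation, so there is no meaningful difference in approach to report.
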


\begin{proof} 
\citep[Theorem 2.17]{PT}.
\end{proof}

The matrix $\Phi_0$ can be computed using an explicit formula from~\cite{PT} that we will now recall.

\begin{defn}
For $l \in \QQ$ and $r \in \ZZ_{\geq 0}$, let the rising factorial $\prod_{j=0}^{r-1} (l + j)$ be
denoted by $(l)_r$.
\end{defn}

\begin{defn} \label{defn:alpha}
Let $u, v \in \ZZ^{n+1}$ be such that we have
$x^u, x^v \in B$ and 
$p (u_i + 1) \equiv v_i + 1 \bmod{d}$ for all~$i$. 
We define
\begin{equation*}
\alpha_{u,v} = \prod_{i=0}^n a_i^{(p (u_i + 1) - (v_i + 1))/d} 
    \biggl( \sum_{m,r} \left( \frac{u_i+1}{d} \right)_r 
        \sum_{j=0}^{r} \frac{\bigl(p a_i^{p-1}\bigr)^{r-j}}{(m-pj)!j!} \biggr),
\end{equation*}
where the sum in the $i$-th factor of the product is over all integers $m, r \geq 0$  
that satisfy the equation $p(u_i+1)-(v_i+1)=d(m-pr)$.
\end{defn}

\begin{thm} \label{thm:01-03-diagfrob}
Let $\omega_i$ denote an element of $\cB$ corresponding to a tuple 
$u \in \ZZ^{n+1}$ and let $\omega_j$ denote the unique element of~$\cB$ 
corresponding to a tuple $v \in \ZZ^{n+1}$ such that
$p (u_l + 1) \equiv v_l + 1 \bmod{d}$ for all $l$. Then we have
\begin{equation} \label{eq:diagfrobformula}
p^{-1} \Frob_p (\omega_i) = 
    (-1)^{k(v)} \frac{(k(v) - 1)!}{(k(u) - 1)!} p^{n-k(u)} \alpha_{u,v}^{-1} \cdot \omega_j
\end{equation}
as elements of $\Hrig^n(U_0)$.
\end{thm}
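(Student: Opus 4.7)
My plan is to compute $p^{-1}\Frob_p(\omega_i)$ by applying Frobenius directly to the representative $\omega_i = x^u \Omega/P_0^{k(u)}$, and then reducing the resulting cohomology class back into the basis $\cB$ via a Griffiths--Dwork reduction tailored to the diagonal polynomial $P_0$. Using $\Frob_p(x_j) = x_j^p$, $\Frob_p(dx_j) = p\,x_j^{p-1}\,dx_j$, and that $\sigma$ fixes the coefficients $a_j$ of $P_0$, one computes $\Frob_p(\Omega) = p^{n}(x_0\cdots x_n)^{p-1}\Omega$ and $\Frob_p(P_0^{k(u)}) = P_0(x^p)^{k(u)}$, so that
\[
p^{-1}\Frob_p(\omega_i) \;=\; p^{n-1}\,\frac{x^{w}\,\Omega}{P_0(x^p)^{k(u)}},
\]
where $w_j = p(u_j+1) - 1$.

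Next I would expand $P_0(x^p)^{-k(u)}$ as an overconvergent $p$-adic series of terms of the form $(\text{monomial})/P_0^{N}$. Writing $P_0(x^p) = P_0^p + p\,\Delta$, where $p\Delta = P_0(x^p) - P_0^p$ is divisible by $p$ because $P_0^p \equiv P_0(x^p)\pmod p$, the binomial series provides such an expansion. Crucially, in the diagonal case the series reorganises into a product over the coordinates $x_j$: this is Dwork's exponential splitting, and it is the source of the outer product $\prod_{j=0}^{n}$ in Definition~\ref{defn:alpha}, of the factor $(p\,a_j^{p-1})^{r-j}$ in each factor, and of the offset power $a_j^{(p(u_j+1)-(v_j+1))/d}$ that keeps track of the shifts of the exponent of $x_j$ by $d$.

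I would then apply Griffiths--Dwork reduction to each term. The relation $d(Q\,\Omega/P_0^{k+1})\equiv 0$ combined with $\partial_j P_0 = d\,a_j\,x_j^{d-1}$ absorbs a factor $x_j^{d-1}$ at the cost of lowering $k$ by one and producing a prefactor $-1/k$; iterating shifts the exponent of $x_j$ down by $d$ and accumulates a rising factorial $((u_j+1)/d)_r$. The reduction terminates at the unique $v_j$ with $v_j < d-1$ and $v_j + 1 \equiv p(u_j+1)\pmod d$, which is exactly the $v$ in the statement. The prefactor $(-1)^{k(v)}(k(v)-1)!/(k(u)-1)!\cdot p^{n-k(u)}$ drops out by telescoping the $-1/k$ factors across all reduction steps and by accounting for the initial $p^{n-1}$ together with the powers of $p$ that the Dwork expansion contributes.

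The main obstacle will not be any single step but the combinatorial matching: one must identify the triple summation that arises from the computation (indices of the multinomial expansion of $\Delta^{r}$, of the binomial expansion of $P_0(x^p)^{-k(u)}$, and of the number of Griffiths reductions per coordinate) with the triple $(m,r,j)$ in Definition~\ref{defn:alpha}, subject to the divisibility constraint $p(u_j+1)-(v_j+1) = d(m - pr)$ for each $j$. Because $P_0$ is diagonal, this matching factors over $j \in \{0,\ldots,n\}$, so the identity to be verified reduces to a single one-variable identity that can be checked by reading off the Taylor coefficients of the associated Dwork splitting factor, after which formula~\eqref{eq:diagfrobformula} follows.
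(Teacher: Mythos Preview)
The paper does not actually prove this statement; its entire proof is the citation ``\cite[Theorem~4.3]{PT}''. Your outline is the standard argument one expects to find there: apply the Frobenius lift to the representative $x^{u}\Omega/P_0^{k(u)}$, expand $P_0(x^p)^{-k(u)}$ via the Dwork splitting (which factorises over the coordinates because $P_0$ is diagonal), and then run a coordinatewise Griffiths--Dwork reduction, collecting the rising factorials and the powers of $a_j$ and $p$ along the way. Your computation of $\Frob_p(\Omega)=p^{n}(x_0\cdots x_n)^{p-1}\Omega$ and of the exponent vector $w_j=p(u_j+1)-1$ is correct, and the termination of the reduction at the unique $v$ with $v_j<d-1$ and $p(u_j+1)\equiv v_j+1\pmod d$ is exactly the right target.

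Since the paper offers only a pointer, there is nothing substantive to compare; what you have written is a faithful sketch of the classical Dwork computation for diagonal hypersurfaces, and no step is wrong. The one place that would need real work in a full write-up is, as you already flag, the bookkeeping that identifies your triple sum with the $(m,r,j)$ sum in Definition~\ref{defn:alpha} under the constraint $p(u_i+1)-(v_i+1)=d(m-pr)$; this is genuinely a one-variable identity after the diagonal factorisation, but the indexing conventions (note that in Definition~\ref{defn:alpha} the letter $j$ is an inner summation index, not a coordinate) make it easy to slip.
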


\begin{proof}
\citep[Theorem 4.3]{PT}.
\end{proof}

The deformation method consists of the following steps: 

\begin{enumerate}[\it Step 1.]
\item Compute the matrix $M \in M_{b \times b}(\QQ_q(t))$ of the Gauss--Manin connection $\nabla$.
\item Compute the matrix $\Phi_0 \in M_{b \times b}(\QQ_p)$ of the action of the map $p^{-1}\Frob_p$ on 
      $\Hrig^n(U_0)$ using Theorem \ref{thm:01-03-diagfrob}.
\item Solve the differential equation from Theorem~\ref{thm:eqphi} for $\Phi \in M_{b \times b}(\QQ_q \langle t, 1/r \rangle^{\dag})$ and evaluate at $t=1$ to obtain the 
      matrix $\Phi_1 \in M_{b \times b}(\QQ_q) $ of the action of the map $p^{-1}\Frob_p$ on $\Hrig^n(U_1)$.
\item Use Theorem~\ref{thm:hypersurface} to deduce the zeta function $Z(X_1,T)$. 
\end{enumerate}

\begin{rem} \label{rem:generic}
We have already mentioned that the deformation algorithm only applies to generic hypersurfaces. We will
now explain what the exact conditions are~\citep{PT}.

First, the hypersurfaces $\mathcal{X}_0$ and $\mathcal{X}_1$  have 
to be smooth over $\ZZ_q$. For $\mathcal{X}_0$ this is guaranteed if~$p$ does not
divide~$d$ and for $\mathcal{X}_1$ if the hypersurface $X_1$ is smooth. 

Second, we need that the matrix $M$ of the 
Gauss--Manin connection does not have a pole in the open $p$-adic unit disk around $t=1$. Note that such a pole can only be an 
apparent singularity, caused by the generic basis $\mathcal{B}$ not being a basis at the pole. By~\citep[Proposition 3.13]{PT}, 
it is sufficient that $\overline{R}(1) \not = 0$, where $\overline{R} \in \FF_q[t]$ is the reduction modulo~$p$ of the polynomial
$R \in \ZZ_q[t]$ defined in~\citep[Definition 3.12]{PT}. When $X_1$ varies, $\overline{R}(1)$ is a polynomial 
in the coefficients of its defining polynomial. Since $\overline{R}(1) \not = 0$ for diagonal hypersurfaces $X_1$ by the proof 
of~\citep[Theorem 3.6]{PT}, we have that $\overline{R}(1) \not = 0$ for generic $X_1$. 

Therefore, even for fixed $a_0,\ldots,a_n \in \ZZ_p^{\times}$ defining $P_0$, as long as $p$ does not divide~$d$ the deformation
method can be applied to a generic hypersurface $X_1$.  
\end{rem}

\begin{thm} \label{thm:complexityold}
In the deformation method as presented in~\citep{PT}, the four steps above have the following complexities~\citep[\S 7]{PT}: 
\begin{enumerate}[\it Step 1.]
\item \makebox[7cm][l]{time $\SoftOh(\log(p) a^2 (d^{n(\omega+2)} e^{n(\omega+1)} + d^{5n} e^{3n}))$}  \makebox[5cm][l]{space $\SoftOh(\log(p) a^2 d^{4n} e^{3n})$}
\item \makebox[7cm][l]{time $\SoftOh(p a^3 d^{4n})$}                                                   \makebox[5cm][l]{space $\SoftOh(\log(p) a d^{2n})$}
\item \makebox[7cm][l]{time $\SoftOh(p a^3 d^{n(\omega+4)} e^{2n})$}                                   \makebox[5cm][l]{space $\SoftOh(p a^3 d^{5n} e^n)$}
\item \makebox[7cm][l]{time $\SoftOh(\log^2(p) a^2 d^{n(\omega+1)})$}                                  \makebox[5cm][l]{space $\SoftOh(\log(p) a^2 d^{3n})$}
\end{enumerate}
\end{thm}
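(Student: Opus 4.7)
The strategy is to verify each of the four complexity bounds by analyzing the corresponding algorithmic step as described in \citep{PT}. Two quantities recur throughout the analysis: the dimension $b = \lvert B \rvert$, which by the remark following Theorem~\ref{thm:hypersurface} is $\SoftOh(d^n e^n)$ (the factor $e^n$ coming from the standard count of monomials with each exponent less than $d-1$), and the $p$-adic working precision $N$, which by the Weil bounds applied to the eigenvalues of $\Frob_p$ is $\SoftOh(\log p)$ for the final output, and which a standard backward analysis through each step leaves at $\SoftOh(\log p)$ up to polynomial factors in the other parameters.

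For Step~1, the plan is to compute $M$ column by column, differentiating each basis form $Q\Omega/P^k$ with respect to $t$ and reducing the resulting higher-pole-order form back into $\cB$ by a Griffiths--Dwork-style procedure over $\QQ_q[t]$. Each reduction is a linear-algebra problem whose size is polynomial in $d^n e^n$, and tracking the two subroutines (iterative pole-order reduction and the final linear solve) yields the two summands in the time bound; the space bound reflects storing the intermediate matrices of bounded $t$-degree. For Step~2, I would loop over the $b$ pairs $(u,v)$ related by the Frobenius congruence $p(u_i+1)\equiv v_i+1 \bmod d$ and evaluate Definition~\ref{defn:alpha} directly. The crucial estimate is that the double sum over $(m,r)$ has effective length $\SoftOh(p)$ at precision $\SoftOh(\log p)$, so each entry costs $\SoftOh(p)$ arithmetic operations in $\ZZ_q$; combined with $b$ entries and the cost of arithmetic in $\ZZ_q$, this gives $\SoftOh(pa^3 d^{4n})$ in time.

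Step~3 is the main obstacle. I would solve the ODE of Theorem~\ref{thm:eqphi} by expanding $\Phi$ as a power series in $t$ around $t=0$ and iterating on the coefficients using $\Phi_0$ as initial condition. The $t$-adic precision required is $\SoftOh(p)$: this is forced on one hand by the $t^{p-1}$ factor on the right-hand side, and on the other hand by the overconvergence radius needed to evaluate at $t=1$ with the prescribed $p$-adic precision. Each coefficient update is a constant number of $b\times b$ matrix multiplications over $\QQ_q$ with entries clearing the denominator $r$, giving $\SoftOh(a^3 d^{n(\omega+4)} e^{2n})$ per iteration and $\SoftOh(p a^3 d^{5n} e^n)$ space to store the truncated series. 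The hard part is tracking the interplay between the $p$-adic precision, the $t$-adic precision, and the degrees of $G$ and $r$ carefully enough to achieve the stated exponents; in particular, one must control the precision loss from dividing by powers of $r$ in the overconvergent ring to ensure the evaluation at $t=1$ still has precision $\SoftOh(\log p)$.

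For Step~4, once $\Phi_1$ is in hand I would compute $\Phi_1^a$ (or more precisely the $a$-fold composition of $\sigma^{-1}$-twisted Frobenius, which amounts to $a$ matrix multiplications combined with applications of $\sigma$) by repeated squaring, using $\SoftOh(\log a)$ matrix products of cost $\SoftOh(d^{n\omega})$ each, and then extract the reverse characteristic polynomial $\chi(T)$ of Theorem~\ref{thm:hypersurface} in a further $\SoftOh(d^{n(\omega+1)})$ operations. The extra factor of $\log p$ in the time bound comes from the bit-size of the integer coefficients of $\chi(T)$, which by the Weil bounds are of size $\SoftOh(\log p \cdot \log q)$ and must be recovered exactly. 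Combining the four estimates multiplicatively for time and additively for memory gives exactly the stated bounds.
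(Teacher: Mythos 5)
The paper's proof of Theorem~\ref{thm:complexityold} is a single line: it cites \citep[\S 7]{PT} and notes the specializations $a=a'$, $d_t=1$ induced by the chosen pencil $P$. You instead attempt to rederive the four bounds from scratch, which is a legitimately different (and more ambitious) route, but your intermediate estimates are wrong in ways that would not reproduce the stated complexities.

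Concretely, you claim $b=\lvert B\rvert\in\SoftOh(d^n e^n)$, citing the remark after Theorem~\ref{thm:hypersurface}. That remark actually gives the exact formula $b=\frac{1}{d}\bigl((d-1)^{n+1}+(-1)^{n+1}(d-1)\bigr)$, which is $\BigOh(d^n)$; since $\SoftOh$ hides only logarithmic factors, $\SoftOh(d^n e^n)$ is strictly larger and the claim is false. The $e^n$ factors in the theorem enter through $\deg(G),\deg(r)\in\SoftOh((de)^n)$ and through the $t$-adic truncation $K$, not through $b$. You also assert that the working $p$-adic precision is $\SoftOh(\log p)$; the paper states repeatedly (e.g.\ around equation~\eqref{eq:alpha} and again in the Step~3 discussion) that all $p$-adic precisions are $\SoftOh(ad^n)$, and indeed the coefficients of $\chi(T)$ have $\SoftOh(ad^n\log p)$ bits by the Weil bounds, so one needs $\SoftOh(ad^n)$ $p$-adic digits. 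With your $\SoftOh(\log p)$ precision, the $a^2$ and $a^3$ factors in Steps~1--3 could not possibly arise, so your analysis would not produce the bounds you are trying to prove. Finally, in Step~2 the $p$-linearity does not come from the ``effective length of the double sum'' being $\SoftOh(p)$: the constraint $p(u_i+1)-(v_i+1)=d(m-pr)$ pins $m$ to $r$, so the sum has only $\SoftOh(ad^n)$ terms; the linear-in-$p$ cost comes from naively computing the factorials $(m-pj)!$ with $m\leq\mathcal{M}\in\BigOh(p\,ad^n)$. This last misattribution matters, because the entire point of the subsequent section is to replace exactly that factorial computation with a matrix-product algorithm.

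Your Step~3 and Step~4 sketches are directionally reasonable but inherit the wrong precision and wrong $b$, so they would not close either. Given that the paper offers no original argument for this theorem but only a citation, the honest course here is to invoke \citep[\S 7]{PT} with $a=a'$, $d_t=1$ rather than attempt a rederivation; if you do want a self-contained derivation, start by correcting $b\in\BigOh(d^n)$, the precision $N\in\SoftOh(ad^n)$, and the source of $p$-linearity in each step, and redo the bookkeeping.
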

\begin{proof}
\citep[\S 7]{PT}, where we have taken $a=a'$ and $d_t=1$, because of our choice of $P$.
\end{proof}

Clearly the only
steps not polynomial in $\log(p)$ are the second and the third, both of which are quasilinear in~$p$. So to improve the
dependence of the complexity on~$p$, we only have to consider these steps. All the $p$-adic and $t$-adic precisions 
required to obtain provably correct results remain the same as in~\citep{PT}.

\section{Our new algorithm}

Our strategy to reduce the dependence of the complexity on~$p$ to $\SoftOh(p^{1/2})$ will be the same as in \citep{HarveySqrtp}.
First we reduce the steps in our algorithm that are quasilinear in~$p$ to computing certain matrix products. Then we apply an 
algorithm of Bostan,  Gaudry and Schost~\citep{bgs} (based on~\citep{chud}) to compute these products faster than using the naive 
method. The precise result that we need is as follows.

\begin{thm} \label{thm:bgs}
Let $R$ be a commutative ring and $\textsf{M}$ (respectively $\textsf{MM}$)
a function $\NN \rightarrow \NN$ such that polynomials of degree less than $d$ (respectively matrices of size $m \times m$) can be multiplied
in $\textsf{M}(d)$ (respectively $\textsf{MM}(m)$) ring operations (i.e. $+,-,\times$) in $R$. Moreover, let $A(x)$ be an $m \times m$ 
matrix over the polynomial ring $R[x]$ of degree at most $1$. Suppose that the invertibility conditions of~\citep[Theorem 14]{bgs} are satisfied.
Then for any positive integer $N$, the matrix product $A(1)A(2) \cdots A(N)$ can be computed in
\[
\BigOh\left(\textsf{MM}(m) \sqrt{N} + m^2 \textsf{M}(\sqrt{N})\right) 
\]
ring operations in $R$, storing only $\BigOh(m^2 \sqrt{N})$ elements of $R$.
\end{thm}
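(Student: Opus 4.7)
The plan is to specialise the Chudnovsky--Chudnovsky baby-step/giant-step recipe~\citep{chud}, made algorithmically precise in~\citep{bgs}, to matrix-valued recurrences. Set $s = \ceilts{\sqrt{N}}$ and define the polynomial matrix
\[
B(x) = A(x+1) A(x+2) \cdots A(x+s) \in M_{m \times m}(R[x]),
\]
which has degree at most $s$ in $x$. Writing $N = ks + r$ with $0 \leq r < s$ and observing that $B(js) = A(js+1) A(js+2) \cdots A((j+1)s)$, the target product factorises as
\[
A(1) A(2) \cdots A(N) = B(0) \cdot B(s) \cdot B(2s) \cdots B((k-1)s) \cdot A(ks+1) \cdots A(ks+r).
\]
The trailing product of $r < s$ constant matrices, together with the product of the $k \leq s$ matrix values $B(js)$, costs $\BigOh(s \cdot \textsf{MM}(m)) = \BigOh(\textsf{MM}(m) \sqrt{N})$ ring operations. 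The task therefore reduces to producing the $k$ evaluations $B(0), B(s), \ldots, B((k-1)s)$ within the claimed bound.

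To do so, I would follow~\citep{bgs} and carry $B$ not as a list of coefficient matrices but as the table of its values at an arithmetic progression of $s+1$ consecutive integers, growing it by repeated doubling. Starting from $B_1(x) = A(x+1)$, which is already in evaluated form, and having $B_{\ell}$ represented by its values at $0,1,\ldots,\ell$, I would form
\[
B_{2\ell}(x) = B_{\ell}(x+\ell) \cdot B_{\ell}(x)
\]
by first shifting the evaluation table of $B_{\ell}$ by $\ell$ and then multiplying the shifted and unshifted tables entrywise. The shift of a degree-$\ell$ polynomial between two arithmetic progressions of the same common difference can be realised by a single convolution and hence costs $\BigOh(\textsf{M}(\ell))$ operations; applied entrywise to the $m \times m$ matrix this contributes $\BigOh(m^2 \textsf{M}(\ell))$, and the pointwise matrix product adds $\BigOh(\ell \cdot \textsf{MM}(m))$. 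Summing the resulting geometric series over the $\log_2 s$ doublings yields $\BigOh(\textsf{MM}(m) s + m^2 \textsf{M}(s))$ ring operations to obtain $B$ in evaluated form at $0,1,\ldots,s$, and one final shift of stride $s$ then converts this into the required values $B(0), B(s), \ldots, B((k-1)s)$ within the same bound.

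Adding the baby-step and giant-step contributions yields the claimed time bound $\BigOh(\textsf{MM}(m)\sqrt{N} + m^2 \textsf{M}(\sqrt{N}))$, while the space bound $\BigOh(m^2 \sqrt{N})$ comes from keeping only the $\BigOh(s)$ matrix samples of $B$ alive at each doubling step. The hard part — and the only place where the hypothesis on $R$ enters — is the convolution-based shift of an evaluation table between arithmetic progressions: this requires that the relevant integer differences be invertible in $R$, which is precisely what the conditions of~\citep[Theorem 14]{bgs} secure. Once that shift lemma is granted, the remainder of the argument is a routine telescoping of a logarithmic number of doubling steps, and the essential content of the proof can simply be imported from~\citep{bgs}.
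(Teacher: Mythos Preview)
The paper does not prove this theorem at all: its entire proof is the one-line citation ``This is a special case of~\citep[Theorem 9]{HarveySqrtp}, which is based on~\citep[Theorem 15]{bgs}.'' Your proposal instead sketches the actual content of those references, so in that sense you go well beyond what the paper does, and the outline you give is indeed the Bostan--Gaudry--Schost algorithm.

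Two imprecisions in the sketch are worth flagging. First, with $B_\ell(x)=A(x+1)\cdots A(x+\ell)$ one has $B_{2\ell}(x)=B_\ell(x)\cdot B_\ell(x+\ell)$, not $B_\ell(x+\ell)\cdot B_\ell(x)$ as you wrote; the matrices do not commute. Second, and more substantively, your final step --- converting the table of values of $B$ at $0,1,\ldots,s$ into the values at $0,s,2s,\ldots,(k-1)s$ --- is not a ``shift'' in the sense of~\citep[Theorem 14]{bgs}: that shift lemma only moves between arithmetic progressions of the \emph{same} common difference, whereas you are changing the stride from $1$ to $s$. The way~\citep{bgs} avoids this is to carry the evaluation tables at stride $s$ from the outset (so $B_\ell$ is stored at $0,s,2s,\ldots,\ell s$); then each doubling requires a shift by $\ell$ and a shift by roughly $\ell s$, both of stride $s$, and at the end the desired values $B(0),B(s),\ldots$ are already present with no stride change needed. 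You also elide the fact that doubling needs an additional shift to \emph{extend} the table from $\ell+1$ to $2\ell+1$ sample points before the pointwise matrix products can be taken. None of this affects the asymptotic bound, but as written the argument would not go through.
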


\begin{proof} 
This is a special case of~\citep[Theorem 9]{HarveySqrtp}, which is based on \citep[Theorem 15]{bgs}.
\end{proof}

\begin{rem}
We do not explain the invertibility conditions in this theorem in detail. However, they are
easily seen to be satisfied when $2,3,\dots,2^{s}+1 \in R^{\times}$ where $s=\lfloor \log_4(N) \rfloor$. In the 
case we are interested in ($R=\ZZ_p$ or $\ZZ_q$ and $N=p$) this is the case for all $p>2$. For $p=2$ the results in 
this paper do not improve the ones from~\citep{PT} anyway. \\
\end{rem}

\subsection{Diagonal fibre} \mbox{ } \\

We start by analyzing Step 2 of the deformation method, i.e. the computation of the matrix $\Phi_0$.
In~\citep[\S 4]{PT} it is explained that we have to compute the
\begin{equation} \label{eq:alpha}
\alpha_{u,v} = \prod_{i=0}^n a_i^{(p (u_i + 1) - (v_i + 1))/d} 
    \biggl( \sum_{m,r} \left( \frac{u_i+1}{d} \right)_r 
        \sum_{j=0}^{r} \frac{\bigl(p a_i^{p-1}\bigr)^{r-j}}{(m-pj)!j!} \biggr),
\end{equation}
to $p$-adic precision $N_{\Phi_0}' \in \SoftOh(ad^n)$ and that for this we only have to consider terms in the outer sum with 
$r \leq \mathcal{R}, m \leq \mathcal{M} \mbox{ and } p(u_i+1)-(v_i+1)=d(m-pr)$,
where $\mathcal{R} \in \SoftOh(ad^n)$ and $\mathcal{M} \in \BigOh(p\mathcal{R})$. Note that all $p$-adic precisions in the algorithm are $\SoftOh(ad^n)$ by~\citep[\S 7]{PT}, 
so that a single multiplication in $\QQ_p$ takes time and space $\SoftOh(\log(p)ad^n)$.

From the formula above we see that we have to compute $k!$ (to finite $p$-adic precision) for all $i=0, \ldots, n$ and all $k \leq \mathcal{M}$ such that 
\[
k \equiv \frac{p(u_i+1)-(v_i+1)}{d} \bmod{p}. 
\]
Computing these factorials naively as in~\citep{PT} clearly takes time at least linear in~$p$, so we will have to proceed
differently.

Let us fix a basis vector $u \in B$ and let $v \in B$ be the unique basis vector such that we have $p(u_i+1)-(v_i+1) \equiv 0 \bmod{d}$ for all $0 \leq i \leq n$.
For a single value of $0 \leq i \leq n$ we can compute and store all $k!$ with $k \leq \mathcal{M}$ and 
\[k \equiv \frac{p(u_i+1)-(v_i+1)}{d} \bmod{p}\]
in time and space $\SoftOh(p^{1/2} a^2 d^{2n})$. Indeed, we need to compute $k!$ for $\mathcal{R} \in \SoftOh(ad^n)$ different values
of $k$ and to go from one value of $k$ to the next we have to multiply by the product 
\[(k+1) \ldots (k+p),\]
which can be computed in time and space $\SoftOh(p^{1/2}ad^n)$ using the algorithm from Theorem~\ref{thm:bgs} with $m=1$, $R=\ZZ_p$, $A(x)=(x+k)$ and $N=p$. 
Doing this computation for all $0 \leq i \leq n$ instead of a single value of $i$ does not change the time and space requirements, since the factor $n$ is 
absorbed by the $\SoftOh$ symbol. This takes care of all the $(m-pj)!$ we have to compute in~\eqref{eq:alpha}.

The elements
\[ 
a_i^{(p (u_i + 1) - (v_i + 1))/d}, \left( \frac{u_i+1}{d} \right)_r, (pa_i^{p-1})^{r-j} \mbox{ and } j!
\]
that we need in~\eqref{eq:alpha} (note that $u$ is still fixed), can be computed and stored naively in time and space
$$\SoftOh(n \mathcal{R} \log^2(p) ad^n) \subset \SoftOh(\log^2(p) a^2 d^{2n}).$$ 
Moreover, $\alpha_{u,v}$ can be computed 
from these elements and the $k!$ in time 
$$\SoftOh(n \mathcal{R}^2 \log(p) a d^n) \subset \SoftOh(\log(p) a^3 d^{3n})$$
and additional space $\SoftOh(\log(p) a d^n)$. Putting everything together, we find that a single $\alpha_{u,v}$ can 
be computed in time $\SoftOh(p^{1/2} a^2 d^{2n}+a^3 d^{3n})$ and space $\SoftOh(p^{1/2} a^2 d^{2n})$. Since
there are $b \in \BigOh(d^n)$ different basis vectors $u$, all of the $\alpha_{u,v}$ can be computed in time 
$\SoftOh(p^{1/2} a^2 d^{3n}+a^3 d^{4n})$ and space $\SoftOh(p^{1/2} a^2 d^{2n})$. The same is then clearly true
for the matrix $\Phi_0$ by Theorem~\ref{thm:01-03-diagfrob}.

Summarising, we have proved the following theorem:
\begin{thm} \label{thm:complexitystep2}
Step 2 of the deformation method, i.e. the computation of the matrix $\Phi_0$ to the required $p$-adic precision can be carried out in
\begin{align*}
&\mbox{ time: } \; \; \; \SoftOh(p^{1/2} a^2 d^{3n}+a^3 d^{4n}), &\mbox{ space: }& \; \; \; \SoftOh(p^{1/2} a^2 d^{2n}).
\end{align*}

\end{thm}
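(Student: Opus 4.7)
The plan is to follow the $\sqrt{p}$ strategy pioneered by Harvey and reduce the sole $p$-linear bottleneck in Step 2 to an instance of the one-dimensional baby-step--giant-step product covered by Theorem~\ref{thm:bgs}. Inspecting~\eqref{eq:alpha} together with the bounds $\mathcal{R}\in\SoftOh(ad^n)$ and $\mathcal{M}\in\BigOh(p\mathcal{R})$ from~\citep{PT}, I see that the only ingredients whose naive computation is $\Omega(p)$ are the factorials $(m-pj)!$; every other factor (powers of $a_i$, rising factorials, $j!$ for $j\le \mathcal{R}$) can be produced in time polynomial in $\log(p),a,d^n$ by the methods already used in~\citep{PT}. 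The working $p$-adic precision is $N'_{\Phi_0}\in\SoftOh(ad^n)$, so each ring operation in $\ZZ_p$ costs $\SoftOh(\log(p)ad^n)$.

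For a fixed basis vector $u\in B$ and fixed coordinate $0\le i\le n$, the integers $k$ for which I need $k!$ form a single residue class modulo $p$ (namely $k\equiv (p(u_i+1)-(v_i+1))/d\pmod p$), so there are only $\mathcal{R}$ such values, spaced by $p$. Going from $k!$ to $(k+p)!$ requires multiplying by $(k+1)(k+2)\cdots(k+p)$, which is exactly the product $A(1)A(2)\cdots A(N)$ of Theorem~\ref{thm:bgs} with $m=1$, $R=\ZZ_p$, $A(x)=x+k$ and $N=p$; the invertibility remark after Theorem~\ref{thm:bgs} shows the hypotheses hold whenever $p>2$. Each such jump thus costs $\SoftOh(p^{1/2})$ ring operations, so processing all $\mathcal{R}$ jumps (and absorbing the factor $n+1$ into $\SoftOh$) gives time and space $\SoftOh(p^{1/2}a^2d^{2n})$ for all factorials needed for a given $u$.

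The auxiliary quantities $a_i^{(p(u_i+1)-(v_i+1))/d}$, $\bigl((u_i+1)/d\bigr)_r$, $(pa_i^{p-1})^{r-j}$ and $j!$ for $j\le \mathcal{R}$ can be computed by the naive procedure of~\citep{PT} in time $\SoftOh(\log^2(p)a^2d^{2n})$ per $u$. Plugging these together with the stored factorials into~\eqref{eq:alpha} and summing costs $\SoftOh(n\mathcal{R}^2\log(p)ad^n)\subset\SoftOh(\log(p)a^3d^{3n})$ per $u$. Summing over the $b\in\BigOh(d^n)$ basis vectors and reusing the factorial storage across $u$, the total cost becomes time $\SoftOh(p^{1/2}a^2d^{3n}+a^3d^{4n})$ and space $\SoftOh(p^{1/2}a^2d^{2n})$. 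Theorem~\ref{thm:01-03-diagfrob} then converts the $\alpha_{u,v}$ into the entries of $\Phi_0$ at no extra asymptotic cost.

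The only nontrivial ingredient is the baby-step--giant-step evaluation of the factorials; everything else is a rerun of the bookkeeping and precision analysis in~\citep[\S 4, \S 7]{PT}, so the main obstacle is simply to verify that the product $(k+1)\cdots(k+p)$ indeed fits the $m=1$ case of Theorem~\ref{thm:bgs} at the required invertibility, which is immediate for $p>2$.
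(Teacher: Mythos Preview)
Your proposal is correct and follows essentially the same argument as the paper: identify the factorials $(m-pj)!$ as the only $p$-linear bottleneck, observe that for fixed $u$ and $i$ the required $k!$ lie in a single residue class modulo $p$, and use Theorem~\ref{thm:bgs} with $m=1$, $R=\ZZ_p$, $A(x)=x+k$, $N=p$ to compute each jump $(k+1)\cdots(k+p)$ in $\SoftOh(p^{1/2})$ ring operations, then do the remaining bookkeeping exactly as in~\citep{PT}. The complexity tallies and the final aggregation over the $b\in\BigOh(d^n)$ basis vectors match the paper's derivation line for line.
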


\subsection{Differential equation} \mbox{ } \\

We now move on to the third step of the deformation method, i.e. the computation of the matrix $\Phi_1$
by solving the differential equation from Theorem~\ref{thm:eqphi} for $\Phi$ and evaluating this matrix at $t=1$. 
Let us start by recalling what we did in~\citep{PT}, to understand why that is quasilinear in~$p$ and how it can be improved. 

Recall from~\citep[\S 5]{PT} that if $C \in M_{b \times b}(\QQ_q[[t]])$ denotes a fundamental matrix of horizontal sections of the
Gauss--Manin connection $\nabla$ with respect to the basis $\mathcal{B}$, i.e. such that
\begin{align*}
&\left(\frac{d}{dt}+\frac{G}{r} \right) C = 0,            &C(0)=I&, 
\end{align*}
then $\Phi = C \Phi_0 \sigma(C^{-1})$ is a solution to the differential equation from Theorem~\ref{thm:eqphi}. Let us write 
$G=\sum_{i=0}^{\deg(G)} G_i t^i$ and $r=\sum_{i=0}^{\deg(r)} r_i t^i$. Note that $\deg(G),\deg(r) \in \SoftOh((de)^n)$ by
\citep[\S 7]{PT}. A power series
solution $C=\sum_{i=0}^{\infty} C_i t^i$ can be obtained by solving the following matrix recurrence~\cite[(5.5)]{PT}:
\begin{align*}
  C_0&=I, \\
  C_{i+1} &= \frac{-1}{r_0(i+1)} \left( \sum_{j=i-\deg(G)}^i G_{i-j} C_j + \sum_{j=i-\deg(r)+1}^i r_{i-j+1}(jC_j) \right),
\end{align*}
where we take $C_j=0$ for $j < 0$. 

The inverse matrix $C^{-1}$ satisfies the 
dual differential equation~\citep[Remark 5.7]{PT}:
\begin{align*}
&\left(\frac{d}{dt}-\frac{G^t}{r} \right) (C^{-1})^t = 0,            &C^{-1}(0)=I&,
\end{align*}
which can be solved by a similar recurrence.

To obtain $\Phi$ as an element of $M_{b \times b}(\QQ_q[[t]])$ to $t$-adic precision~$K$, we need to determine
$C_i$ for $i < K$ and $(C^{-1})_i$ for $i < \lceil K/p \rceil$. Then we compute the product 
$$\Phi = C \Phi_0 \sigma(C^{-1}),$$ 
where $\Phi_0$ has already been obtained in the second step. It is shown in~\citep[\S 7]{PT}
that to recover $\Phi_1$ to the required $p$-adic precision, we have to take 
$$K \in \SoftOh(pad^{2n}e^n).$$ 
%Moreover, before we can
%evaluate $\Phi$ at $t=1$, we have to multiply it by some polynomial $s \in \ZZ_q[t]$~\citep[Theorem 6.6]{PT} with 
%$\deg(s) \in \SoftOh(pad^{2n}e^n)$~\citep[\S 7]{PT} and truncate to $t$-adic precision $K$ again, because of some technical
%convergence issues. 
Because of some technical convergence issues, we need to multiply $\Phi$ by the polynomial~$s \in \ZZ_q[t]$ from~\citep[Theorem 6.6]{PT}
and truncate it to $t$-adic precision~$K$ again before we can evaluate it. By~\citep[\S 7]{PT} the degree of $s$ is in $\SoftOh(pad^{2n}e^n)$. 
Finally, we evaluate $(s/s(1))\Phi$ at $t=1$ to obtain the matrix $\Phi_1$. Exact bounds for all $p$-adic and $t$-adic 
precisions can be found in~\citep[\S 5, \S 6]{PT}. All $p$-adic precisions in the algorithm are $\SoftOh(ad^n)$, so elements of $\QQ_q$ can be multiplied
in time and space~$\SoftOh(\log(p) a^2 d^{n})$.

The computation from the previous paragraph takes time at least linear in~$p$: the number of terms of $C$ is $K$ so computing this object or 
multiplying by it already has complexity at least linear in~$p$. Note that we do not have this problem for the matrix $C^{-1}$ since that
has $\lceil K/p \rceil$ terms which is $\SoftOh(ad^{2n}e^n)$ by Theorem~\ref{thm:extra} below. Recall that $\sigma$ sends $t$ to $t^p$, 
so that the number of nonzero terms in $\sigma(C^{-1})$ to $t$-adic precision $K$ is still $\SoftOh(ad^{2n}e^n)$. Finally, the evaluation of 
$(s/s(1))\Phi$ at $t=1$ again has complexity at least linear in~$p$, because there are $K$ terms. Hence we will have to proceed differently.

We may assume $s=r^{\theta}$ with $\theta/p \in \SoftOh(ad^n)$, by Theorem~\ref{thm:extra} below. We have to compute:
\[
(s/s(1))\Phi = (r/r(1))^{\theta} C \Phi_0 \sigma(C^{-1}) 
\]
as an element of $M_{b \times b}(\QQ_q[[t]])$ to $t$-adic precision $K$ and then evaluate at $t=1$. 
We know that the multiplication of $(r/r(1))^{\theta}$ and $C$ has complexity at 
least linear in~$p$. However, as already observed by Hubrechts~\citep{Hubrechts2011}[\S 3.1], we can avoid this multiplication by solving a slightly different differential equation instead. Let 
$D \in M_{b \times b}(\QQ_q[[t]])$ denote the matrix $D=(r/r(1))^{\theta}C$. Then $D=\sum_{i=0}^{\infty} D_i t^i$ satisfies the differential 
equation
\begin{align*}
&\left(\frac{d}{dt} + \frac{G-\theta \frac{dr}{dt} I}{r}\right)D = 0,  &D(0) = \left(\frac{r(0)}{r(1)}\right)^{\theta} I&. 
\end{align*}
So we can avoid the multiplication by $(r/r(1))^{\theta}$ simply by replacing the matrix $G$ by the matrix 
\[H=G-\theta \frac{dr}{dt} I\] 
and solving the recurrence
\begin{align*}
  D_0&=\left(\frac{r(0)}{r(1)}\right)^{\theta} I, \\
  D_{i+1} &= \frac{-1}{r_0(i+1)} \left( \sum_{j=i-\deg(H)}^i H_{i-j} D_j + \sum_{j=i-\deg(r)+1}^i r_{i-j+1}(jD_j) \right),
\end{align*}
where we take $D_j=0$ for $j < 0$. 

The matrix $\Phi_1$ is now given by the evaluation of $D \Phi_0 \sigma(C^{-1})$ to $t$-adic precision~$K$ at $t=1$, which we can
also write as
\begin{align*} %\label{eq:Phi1}
\Phi_1 = \sum_{j=0}^{\lceil K/p \rceil-1} \left( \sum_{i=0}^{K-1-pj} D_i \right) \cdot \Phi_0 \cdot \sigma((C^{-1})_j). 
\end{align*}
The number of terms in the inner sum (say for $j$=0) is at least linear in~$p$, so evaluating it will have complexity at least linear
in~$p$. However, since the $D_i$ satisfy a recurrence, so do the sums $E_i = D_0+\ldots+D_i$:
\begin{align*}
  E_0&=\left(\frac{r(0)}{r(1)}\right)^{\theta} I, \\
  E_{i+1} &= E_i+\frac{-1}{r_0(i+1)} \left( \sum_{j=i-\deg(H)}^i H_{i-j} (E_j-E_{j-1}) + \sum_{j=i-\deg(r)+1}^i r_{i-j+1} \cdot j \cdot (E_j-E_{j-1}) \right),
\end{align*}
where we take $E_j=0$ for $j < 0$. 

The expression for the matrix $\Phi_1$ becomes
\begin{align} \label{eq:Phi12}
\Phi_1 = \sum_{j=0}^{\lceil K/p \rceil-1} E_{K-1-pj} \cdot \Phi_0 \cdot \sigma((C^{-1})_j). 
\end{align}

Note that the orders of the recurrences for $C_i,(C^{-1})_i,D_i,E_i$ are all $\SoftOh((de)^n)$ since $\deg(G)$ and $\deg(r)$ are $\SoftOh((de)^n)$. 
However, to apply Theorem~\ref{thm:bgs}, we need a recurrence of order~$1$. 
The order of the recurrence for $E_i$ is given by
\[\kappa = \max \{ \deg(H)+1, \deg(r)\}+1.\]
Let $\mathbb{E}_i \in M_{\kappa b \times b}(\QQ_q)$ be the matrix with vertical blocks $E_i,E_{i-1},\ldots,E_{i-\kappa+1}$. 
Then from the recurrence for $E_i$, we find a matrix $A(x) \in M_{b \times b}(\QQ_q[x])$ of degree~$1$ such that
\[
\mathbb{E}_{i+1} = (i+1)^{-1} A(i+1) \mathbb{E}_i 
\]
for all $i \geq 0$. Note that $E_i$ can be easily read off from $\mathbb{E}_i$.

Recall that $\lceil K/p \rceil \in \SoftOh(ad^{2n}e^n)$. Hence the $(C^{-1})_i$ with 
$i \leq \lceil K/p \rceil$ can be computed and stored in  
\begin{align*}
&\mbox{time: } \SoftOh((K/p) (de)^n b^{\omega} a^2 d^n) \subset \SoftOh(\log(p) a^3 d^{n(\omega+4)} e^{2n}), \\
&\mbox{space: }\SoftOh((K/p) b^2 a^2 d^n) \subset \SoftOh(\log(p)a^3 d^{5n} e^n). 
\end{align*}
Applying $\sigma$ to $C^{-1}$ takes time 
$$\SoftOh((K/p) b^2 (\log^2(p) a + \log(p) a^2 d^n)) \subset \SoftOh(\log^2(p) a^3 d^{5n} e^n)$$ 
and negigible additional space. This is all the same as in~\citep[Proposition 7.7]{PT}. 
Now to compute $\Phi_1$, we can proceed in two different ways. 

The first way is to evaluate~\eqref{eq:Phi12} by solving the recurrence for $E_i$ naively. 
Note that we only have to store the last $\SoftOh((de)^n)$ matrices $E_i$ as already
observed by Hubrechts~\citep{Hubrechts2011}[Theorem 2]. Therefore, this takes 
\begin{align*}
&\mbox{time: } \SoftOh(K (de)^n b^{\omega} a^2d^n) \subset \SoftOh(pa^3d^{n(\omega+4)} e^{2n}), \\ 
&\mbox{space: } \SoftOh((de)^n b^2 \log(p) a^2 d^n) \subset \SoftOh(\log(p) a^2 d^{4n} e^n).
\end{align*}

The second way is to determine $\mathbb{E}_{i+p}$ from $\mathbb{E}_i$ repeatedly, using Theorem~\ref{thm:bgs} to compute
\begin{align*}
A(i+p) A(i+p-1) \ldots A(i+1) \;\;\;\;\; \mbox{ and } \;\;\;\;\; (i+1)(i+2) \ldots (i+p). 
\end{align*}
That the indices in the first product are decreasing instead of increasing can easily be circumvented by transposing. We may
assume without loss of generality that the matrix $A(x)$ has entries in $\ZZ_q[x]$, since by~\citep{PT}[Remark 3.11] this is 
the case when $p>n$ and the results in this paper do not improve the ones from~\citep{PT} when $p \leq n$ anyway. We will 
have to increase the $p$-adic precision by 
$$\ord_p((i+1)(i+2)\ldots(i+p)) \in \BigOh(\log_p(K)),$$ 
but this remains $\SoftOh(ad^n)$. Therefore $\mathbb{E}_{i+p}$ can be determined from $\mathbb{E}_i$ in time and space
$$\SoftOh(p^{1/2} (\kappa b)^{\omega} a^2 d^n) \subset \SoftOh(p^{1/2}a^2 d^{n(2\omega+1)} e^{n\omega}).$$
We need to do this $\lceil K/p \rceil$ times and update $\Phi_1$ each time. This takes time
$$\SoftOh\left((K/p) \left(p^{1/2}a^2 d^{n(2\omega+1)} e^{n\omega} + b^{\omega} \log(p) a^2 d^n \right)\right) \subset \SoftOh\left(p^{1/2}a^3 d^{n(2\omega+3)} e^{n(\omega+1)}\right)$$
and negigible additional space. \\

In the discussion above we have used two results that are implicit in~\citep[\S 7]{PT}, but cannot be found there explicitly. For completeness we prove
these results in the next theorem.

\begin{thm} \label{thm:extra} We can take:
\begin{enumerate}
\item $s=r^{\theta}$ with $\theta/p \in \SoftOh(ad^n)$ and $\deg(r) \in \SoftOh((de)^n)$, 
\item $K/p \in \SoftOh(ad^{2n}e^n)$.
\end{enumerate}
\end{thm}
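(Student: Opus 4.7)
The plan is to trace the precision and degree bounds already established in \cite[\S 5--7]{PT}; no essentially new estimates are needed, but the quantities must be extracted in the form claimed.

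For (i), I would first argue that $s$ may be taken as a power of $r$. The matrix $\Phi$ lives in the overconvergent ring $M_{b \times b}(\QQ_q\langle t, 1/r\rangle^{\dag})$, so its only poles in the open $p$-adic unit disk occur at roots of $r$. Multiplying by $r^{\theta}$ for $\theta$ at least the maximum pole order thus kills all denominators of $s\Phi$ on the closed unit disk, which is exactly what \cite[Theorem 6.6]{PT} requires. The required pole order is controlled by $p$ times the working $p$-adic precision $N_{\Phi} \in \SoftOh(ad^n)$ (the factor $p$ coming from the $p$-th power Frobenius substitution $t \mapsto t^p$), so one may take $\theta \in \SoftOh(p a d^n)$, i.e.\ $\theta/p \in \SoftOh(a d^n)$. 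For $\deg(r)$, recall from \cite[Definition 3.12, Proposition 3.13]{PT} that $r$ is built as a resultant/determinant in the partial derivatives of $P$, which are homogeneous of degree $d-1$ in $x_0,\ldots,x_n$ and of degree at most~$1$ in~$t$; a Bezout-type count bounds $\deg_t(r)$ by $\BigOh(d(d-1)^n)$, which lies in $\SoftOh((de)^n)$.

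For (ii), the $t$-adic precision $K$ must be large enough that, after multiplying by $s = r^{\theta}$ and truncating modulo $t^K$, evaluating at $t=1$ returns $\Phi_1$ modulo $p^{N_{\Phi_1}}$ with $N_{\Phi_1} \in \SoftOh(ad^n)$. Because $s\Phi$ is analytic on the closed unit disk, its tail coefficients decay $p$-adically with a slope determined by the pole structure of $\Phi$ at the roots of $r$ and by the working precision; the book-keeping in \cite[\S 5--6]{PT} gives slope of order $1/(\deg(r) \cdot N_{\Phi})$ in valuation per index. Consequently $K \in \SoftOh(p \cdot \deg(r) \cdot N_{\Phi})$, and substituting $\deg(r) \in \SoftOh((de)^n)$ and $N_{\Phi} \in \SoftOh(ad^n)$ and dividing by~$p$ yields
\[
K/p \in \SoftOh\!\left(\deg(r) \cdot N_{\Phi}\right) \subset \SoftOh\!\left(a d^{2n} e^n\right),
\]
which is the claim.

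The main obstacle is the last step: verifying that the combined $t$-adic length factor is $\deg(r) \cdot N_{\Phi}$ and not the looser $\theta \cdot \deg(r) \cdot N_{\Phi}$ that the naive argument would give. The saving of a factor of $p$ reflects the fact that the $p^{-1}$ normalisation of Frobenius cancels against the factor of $p$ appearing in the pole orders of $\Phi$, and the argument must unwind the slope estimates in \cite[\S 5--6]{PT} carefully enough to see this cancellation explicitly rather than bounding the two pieces independently.
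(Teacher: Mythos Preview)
Your plan to extract the bounds from \citep[\S 5--7]{PT} is exactly what the paper does, and the numerical conclusions you reach are correct. However, two of your supporting arguments are off.

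First, your Bezout-type estimate for $\deg(r)$ misidentifies~$r$. In \citep[Definition~3.12]{PT} one takes $r=R=\prod_{k=2}^{n+1}\det(\Delta_k)$, where the $\Delta_k$ are the Griffiths--Dwork reduction matrices; their entries are linear in~$t$ (because $P$ is), and the degree bound $\deg(r)\in\SoftOh((de)^n)$ comes from the \emph{sizes} of these matrices, not from a resultant of the partials $\partial_i P$. Your Bezout count gives $O(d(d-1)^n)\subset O(d^{n+1})$, which is \emph{not} contained in $\SoftOh((de)^n)$ once $d>e^n$, so that line of reasoning does not prove the claim as stated.

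Second, the ``main obstacle'' you flag does not arise. The paper does not argue via a decay slope at all: by \citep[Theorem~6.4]{PT} with $z=\infty$, the product $s\Phi$ is essentially polynomial in~$t$, and $K$ is bounded by its degree. The dominant contributions are $\deg(s)=\theta\deg(r)$ and a pole-at-infinity term of the same order, both of which are $\SoftOh\bigl(p\,h(N_{\Phi})\deg(r)\bigr)\subset\SoftOh(p\,a d^n\cdot(de)^n)$. Dividing by~$p$ gives the result directly; there is no hidden extra factor of~$N_{\Phi}$ to cancel, because $\theta/p$ already is $\SoftOh(N_{\Phi})$ rather than $\SoftOh(pN_{\Phi})$. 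So the argument is in fact simpler than you anticipate.
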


\begin{proof}
We can take $r$ equal to the polynomial $R=\prod_{i=2}^{n+1} \det(\Delta_k)$ from~\citep[Definition 3.12]{PT} which has degree
$\SoftOh((de)^n)$. Let $N_{\Phi}$ 
denote the $p$-adic precision to which the matrix $\Phi \in M_{b \times b}(\QQ_q \langle t,1/r \rangle)$ has to be computed. 
From~\citep[\S 7]{PT}, we know that $N_{\Phi} \in \SoftOh(ad^n)$. By~\citep[Theorems 6.4 (with $z \neq \infty$)]{PT} 
we can take
\[
s = \det(\Delta_2 \ldots \Delta_n) \det(\Delta_{n+1})^{p(n+h(N_{\Phi}))-n},  
\]
where
\[
h(N_{\Phi}) = \max \left\{ i \in \NN \colon i+(n-1)+\ord_p((n-1)!-n \lfloor \log_p(p(n+i)-n) \rfloor < N_{\Phi} \right\}. 
\]
We can take $\theta=p(n+h(N_{\Phi}))$, so that $\theta/p \in \SoftOh(ad^n)$ (note that the logarithm in the definition of 
$h(N_{\Phi})$ is to base~$p$). By~\citep[Theorem 6.4 (with $z = \infty$)]{PT} it follows that 
\[
K \leq \deg(s)+ 1+\deg \Bigl(\prod_{k=2}^n \Delta_k^{-1}\Bigr) + (p(n+h(N_{\Phi}))-n) \deg(\Delta_{n+1}^{-1}) + ph(N_{\Phi}),
\]
where $\deg$ denotes minus the order at $z=\infty$, so that $K/p \in \SoftOh(ad^{2n}e^n)$.
\end{proof}

Summarising, we have proved the following theorem.

\begin{thm} \label{thm:complexitystep3}
Step 3 of the deformation method, i.e. the computation of the matrix $\Phi_1$ to the required $p$-adic precision, can be carried out in either
\begin{align*}
&\mbox{time: } \SoftOh\left(p a^3 d^{n(\omega+4)} e^{2n}\right),                 &\mbox{space: }& \SoftOh\left(\log(p) a^3 d^{5n} e^n\right), \\
\intertext{or alternatively}
&\mbox{time: } \SoftOh\left(p^{1/2} a^3 d^{n(2\omega+3)} e^{n(\omega+1)}\right), &\mbox{space: }& \SoftOh\left(p^{1/2} a^2 d^{n(2\omega+1)} e^{n\omega}+a^3d^{5n}e^n\right). 
\end{align*}
\end{thm}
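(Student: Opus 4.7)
The plan is to turn the detailed construction built up in the preceding discussion into a clean three-part argument. The common starting point is the identity
\[
\Phi_1 = \sum_{j=0}^{\lceil K/p \rceil-1} E_{K-1-pj} \cdot \Phi_0 \cdot \sigma\bigl((C^{-1})_j\bigr),
\]
where $E_i = D_0+\cdots+D_i$ are the partial sums of the series $D = (r/r(1))^{\theta} C$, which satisfies a differential equation with $G$ replaced by $H = G - \theta (dr/dt) I$.

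I would first compute the coefficients $(C^{-1})_j$ for $j < \lceil K/p \rceil$ from the dual recurrence for the Gauss--Manin connection, and then apply $\sigma$ termwise. Using $K/p \in \SoftOh(ad^{2n}e^n)$ and $\deg(r), \deg(G) \in \SoftOh((de)^n)$ from Theorem~\ref{thm:extra}, together with the $p$-adic working precision $N_{\Phi} \in \SoftOh(ad^n)$ fixed in~\citep[\S 7]{PT}, this reproduces the cost analysis of~\citep[Proposition~7.7]{PT} and is absorbed into both bounds of the statement. The analysis then splits according to how one produces the $E_{K-1-pj}$.

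For the first bound I would iterate the order-$\kappa$ recurrence for the $E_i$ naively, where $\kappa = \max\{\deg(H)+1,\deg(r)\}+1 \in \SoftOh((de)^n)$. Keeping only a sliding window of the last $\kappa$ blocks in memory and updating the running sum for $\Phi_1$ whenever the index reaches a value $K-1-pj$, each step costs a constant number of $b \times b$ matrix combinations at precision $N_{\Phi}$. Multiplying the per-step cost by $K \in \SoftOh(pad^{2n}e^n)$ iterations and collecting exponents gives the first bound.

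For the second bound I would linearize the recurrence by stacking the last $\kappa$ blocks of $E_i$ into a single column $\mathbb{E}_i \in M_{\kappa b \times b}(\QQ_q)$, so that $\mathbb{E}_{i+1} = (i+1)^{-1} A(i+1) \mathbb{E}_i$ for some $A(x)$ of size $\kappa b \times \kappa b$ and degree one. Theorem~\ref{thm:bgs} then telescopes the product $A(i+p)\cdots A(i+1)$ in time $\SoftOh(p^{1/2}(\kappa b)^{\omega} a^2 d^n)$ per jump of length $p$; performing $\lceil K/p \rceil$ such jumps and updating $\Phi_1$ after each yields the second bound. The main delicate point will be verifying the hypotheses of Theorem~\ref{thm:bgs}: the invertibility condition reduces to $p>2$ by the remark following that theorem, one may assume $A(x) \in M_{\kappa b \times \kappa b}(\ZZ_q[x])$ for $p>n$ (the only regime of interest, by~\citep[Remark~3.11]{PT}), and the precision loss from the accumulated scalar factor $(i+1)\cdots(i+p)$ is only $\BigOh(\log_p K)$, which is absorbed into $N_{\Phi}$.
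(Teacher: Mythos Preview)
Your proposal is correct and follows exactly the paper's own argument, which is simply a summary of the preceding discussion in \S3.2. The only minor slip is that each step of the naive recurrence for $E_i$ costs $\BigOh(\kappa)$ matrix products rather than a constant number, and this $(de)^n$ factor is precisely what is needed when you ``collect exponents'' to recover the first time bound.
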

\begin{proof}
This follows by adding up all complexities from the discussion above, leaving out terms that are dominated by other terms or powers of logarithms of other terms.
The two different bounds correspond to the two different ways of evaluating $\Phi_1$ explained above. 
\end{proof}

\begin{rem}
Note that if $K/p \in \BigOh(K^{1/2-\epsilon})$ for some $0 < \epsilon < 1/2$, then by~\citep[Theorem 9]{HarveySqrtp} (which
is slightly stronger than Theorem $\ref{thm:bgs}$), all the way at the end of the $\SoftOh(p^{1/2})$ algorithm we can save a 
factor $K/p$ and compute $\Phi_1$ to the required $p$-adic precision in 
\begin{align*}
&\mbox{time: }  \SoftOh\left(p^{1/2}a^2 d^{n(2\omega+1)} e^{n\omega} + a^3 d^{n(\omega+3)} e^n\right),
&\mbox{space: }& \SoftOh\left(p^{1/2}a^2 d^{n(2\omega+1)} e^{n\omega} +a^3d^{5n}e^n \right).
\end{align*}
This improvement only applies when~$p$ is large enough relative to $a$ and $d^n$. Since the exact condition on~$p$ is 
rather complicated, we will not use this improvement in the theorem below.
\end{rem}

We now put everything together to obtain our main result.

\begin{thm} \label{thm:complexity}
Let $X$ be a generic hypersurface of degree $d$ in projective space $\PP^n_{\FF_q}$ over
a finite field $\FF_q$ of characteristic~$p$ not dividing $d$ and cardinality $q=p^a$. Then the 
zeta function $Z(X,T)$ may be computed in
\begin{align*}
&\mbox{time: }  \SoftOh\left(p^{1/2} a^3 d^{n(2\omega+3)} e^{n(\omega+1)}\right), \\
&\mbox{space: } \SoftOh\left(p^{1/2} a^2 d^{n(2\omega+1)}e^{n \omega}+a^3 d^{5n} e^n + a^2 d^{4n} e^{3n} \right).
\end{align*} 
\end{thm}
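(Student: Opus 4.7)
The plan is to invoke Theorem~\ref{thm:hypersurface}, which reduces the computation of $Z(X,T)$ to that of the characteristic polynomial of $(p^{-1}\Frob_p)^a$ on $\Hrig^n(U_1)$, and then to execute the four-step deformation method recalled in Section~2. For Step~1 (computing the Gauss--Manin matrix $M$) and Step~4 (extracting $\chi(T)$, and hence $Z(X,T)$, from the matrix $\Phi_1$), I would simply quote the bounds from Theorem~\ref{thm:complexityold}: neither is the bottleneck, since both are quasilinear in $\log(p)$ rather than in $p^{1/2}$. For Step~2 (computing the diagonal fibre Frobenius $\Phi_0$) I would invoke the new Theorem~\ref{thm:complexitystep2}, and for Step~3 (computing $\Phi_1$), the main obstacle of the whole algorithm, I would use the second (alternative) bound in Theorem~\ref{thm:complexitystep3}; this is where the Bostan--Gaudry--Schost matrix-product algorithm (Theorem~\ref{thm:bgs}) is applied to the first-order recurrence for the partial-sum matrices $\mathbb{E}_i$ in order to jump ahead by $p$ indices at a time.

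With these four bounds in hand, the remaining work is purely arithmetic: add them and discard the terms dominated by another. On the time side, the Step~3 bound $\SoftOh(p^{1/2} a^3 d^{n(2\omega+3)} e^{n(\omega+1)})$ will dominate everything else, because Steps~1 and~4 carry only a factor $\log(p)$ or $\log^2(p)$ in place of $p^{1/2}$ and have smaller $d$- and $e$-exponents once one uses $\omega \geq 2$ (so $2\omega+3 \geq 5$ and $\omega+1 \geq 3$), while the Step~2 bound is strictly smaller in every parameter. On the space side, three terms will survive and combine to the announced bound: the $p^{1/2}$ contribution $\SoftOh(p^{1/2} a^2 d^{n(2\omega+1)} e^{n\omega})$ from Step~3; the contribution $\SoftOh(a^3 d^{5n} e^n)$ from Step~3 coming from storage of the power series $C^{-1}$ to $t$-adic precision $\lceil K/p \rceil$; and the contribution $\SoftOh(a^2 d^{4n} e^{3n})$ from Step~1, which is not dominated by the first two when $\omega$ is close to $2$ because of its $e^{3n}$ factor. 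All other space bounds will be dominated by one of these three.

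A last point to check is that no hypothesis of the theorems invoked is violated. The genericity and the assumption $p \nmid d$ are precisely those of Remark~\ref{rem:generic}, so the deformation method applies and all three complexity theorems are available. The invertibility conditions of Theorem~\ref{thm:bgs}, used inside the proofs of both Theorem~\ref{thm:complexitystep2} and Theorem~\ref{thm:complexitystep3}, hold for all $p > 2$ by the remark following Theorem~\ref{thm:bgs}; the case $p = 2$ is irrelevant because the present theorem offers no improvement over~\citep{PT} in that range. Since all the technical difficulty has been absorbed into Theorems~\ref{thm:complexitystep2} and~\ref{thm:complexitystep3}, no further obstacle remains and the stated bounds follow upon collecting and pruning the step-by-step complexities.
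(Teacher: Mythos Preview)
Your proposal is correct and follows exactly the same approach as the paper: the paper's proof is the single sentence ``This follows from Theorems~\ref{thm:complexityold}, \ref{thm:complexitystep2} and the second part of Theorem~\ref{thm:complexitystep3},'' and your argument simply spells out the term-by-term comparison that justifies this. Your additional checks (that Steps~1, 2, 4 are dominated on the time side, and that exactly three space terms survive) are accurate and more detailed than what the paper provides.
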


\begin{proof}
This follows from Theorems \ref{thm:complexityold}, \ref{thm:complexitystep2} and the second part of Theorem~\ref{thm:complexitystep3}. 
\end{proof}

Moreover, we obtain a space efficient version of the deformation method as well.

\begin{thm} \label{thm:complexity2}
Let $X$ be a generic hypersurface of degree $d$ in projective space $\PP^n_{\FF_q}$ over
a finite field $\FF_q$ of characteristic~$p$ not dividing $d$ and cardinality $q=p^a$. Then the 
zeta function $Z(X,T)$ may be computed in
\begin{align*}
&\mbox{time: }  \SoftOh\left(pa^3d^{n(\omega+4)}e^{2n}+a^2(d^{n(\omega+2)}e^{n(\omega+1)}+d^{5n} e^{3n})\right), \\
&\mbox{space: } \SoftOh\left(\log(p) \left(a^3 d^{5n} e^n + a^2 d^{4n} e^{3n}\right) \right).
\end{align*} 
\end{thm}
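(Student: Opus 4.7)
The plan is to observe that Theorem~\ref{thm:complexity2} is really just a bookkeeping consequence of the complexities we already have in hand; the challenge is simply to choose, at each of the four steps of the deformation method, whichever variant (the old one from Theorem~\ref{thm:complexityold} or the new one from Theorems~\ref{thm:complexitystep2} and \ref{thm:complexitystep3}) produces the smaller space bound without pushing the time past what Theorem~\ref{thm:complexity2} allows. Since the target space bound contains only a $\log(p)$ prefactor, and not $p^{1/2}$, I would first note that the new Step~2 of Theorem~\ref{thm:complexitystep2} is \emph{not} usable here, because its space bound $\SoftOh(p^{1/2} a^2 d^{2n})$ exceeds the budget. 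Instead I would fall back on the old Step~2 from Theorem~\ref{thm:complexityold}.

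Next I would assemble the four contributions: Steps~1 and~4 are used exactly as stated in Theorem~\ref{thm:complexityold}, Step~2 is used as stated in Theorem~\ref{thm:complexityold} as well, and for Step~3 I would apply the \emph{first} alternative in Theorem~\ref{thm:complexitystep3}, which keeps the $p$-linear dependence in time but collapses the space from $\SoftOh(p a^3 d^{5n} e^n)$ down to $\SoftOh(\log(p) a^3 d^{5n} e^n)$; this is the step where the space saving actually happens, thanks to the Hubrechts-style observation that only the most recent $\SoftOh((de)^n)$ matrices $E_i$ need be stored while iterating the recurrence.

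Finally I would add up the four time bounds and the four space bounds and discard dominated terms. Using $\omega \geq 2$, the old Step~2 time $\SoftOh(p a^3 d^{4n})$ is absorbed into the Step~3 time $\SoftOh(p a^3 d^{n(\omega+4)} e^{2n})$ since $d^{n(\omega+4)} \geq d^{6n} \geq d^{4n}$; similarly, the $\log^2(p) a^2 d^{n(\omega+1)}$ contribution of Step~4 is dominated by Step~1, and the space contributions of Steps~2 and~4 are dominated by those of Steps~1 and~3. What survives is exactly the stated time bound $\SoftOh(p a^3 d^{n(\omega+4)} e^{2n} + a^2(d^{n(\omega+2)} e^{n(\omega+1)} + d^{5n} e^{3n}))$ and space bound $\SoftOh(\log(p)(a^3 d^{5n} e^n + a^2 d^{4n} e^{3n}))$.

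The main obstacle, such as it is, lies not in introducing new ideas but in being careful with the choice of variant at Step~3: one must verify that the first alternative of Theorem~\ref{thm:complexitystep3} really attains the claimed space bound, which comes down to checking that (i)~the sliding window of $E_i$'s is genuinely of length $\SoftOh((de)^n)$, and (ii)~the matrices $C^{-1}$ and $\Phi_0$ fit inside $\SoftOh(\log(p) a^3 d^{5n} e^n)$ space at the $p$-adic precisions prescribed in \citep[\S 7]{PT}. Both checks are already implicit in the discussion preceding Theorem~\ref{thm:complexitystep3}, so the proof itself can be kept to a one-line appeal to the cited theorems together with the observation about dominated terms.
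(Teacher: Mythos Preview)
Your proof is correct and follows the same bookkeeping strategy as the paper: aggregate the per-step complexities, using the first alternative of Theorem~\ref{thm:complexitystep3} for Step~3, and discard dominated terms. The paper's one-line proof actually cites Theorem~\ref{thm:complexitystep2} among its ingredients, but you are right to observe that the space bound $\SoftOh(p^{1/2} a^2 d^{2n})$ from that theorem would violate the claimed $\log(p)$-prefactor space budget; reverting to the old Step~2 from Theorem~\ref{thm:complexityold}, whose space $\SoftOh(\log(p)\, a\, d^{2n})$ is easily absorbed, is the correct choice here. In this respect your argument is more careful than the paper's own citation list.
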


\begin{proof}
This follows from Theorems \ref{thm:complexityold}, \ref{thm:complexitystep2} and the first part of Theorem~\ref{thm:complexitystep3}. 
\end{proof}

\begin{rem} 
Since this result is obtained by a straightforward combination of the work in~\citep{PT} and~\citep{Hubrechts2011}, it should not come
as a surprise to experts. However, as far as we are aware, no algorithm with similar time and space complexities appears in the literature. 
In particular, this algorithm represents an improvement of another algorithm of Harvey~\citep{HarveyArithmeticSchemes} which runs in
\begin{align*}
\mbox{time: }  &\BigOh \left(p \log(p)^{1+\epsilon} 2^{6n^2+13n}  a^{3n+4+\epsilon} n^{3n+4+\epsilon} (d+1)^{3n^2+6n+\epsilon} \right), \\ 
\mbox{space: } &\BigOh\left(\log(p) 2^{4n^2+9n}  a^{2n+3} n^{2n+2} d^{2n^2+4n} \right),
\end{align*}
but again can be applied more generally and is conceptually simpler.
\end{rem}

\begin{rem}
Most ideas in this paper and~\citep{PT} are not limited to smooth projective projective hypersurfaces, but
also apply to nondegenerate hypersurfaces in projective toric varieties for example. The only thing that is not straightforward is to find a replacement 
for the class of diagonal projective hypersurfaces, for which it is easy to compute $\Phi_0$. A good guess would be 
to take hypersurfaces with defining polynomials that are maximally sparse or which have a lot of automorphisms.
\end{rem}

\bibliographystyle{plainnat}
\bibliography{deformation2}

\end{document}